\documentclass[11pt]{article}

\textwidth=16.5cm
\oddsidemargin=0cm
\evensidemargin=0cm

\usepackage{amssymb}
\usepackage{verbatim}
\usepackage{array}
\usepackage{latexsym}
\usepackage{enumerate}
\usepackage{float}
\usepackage{amsmath}
\usepackage{amsfonts}
\usepackage{amsthm}
\usepackage{mathrsfs}
\usepackage{color}
\usepackage[english]{babel}

\makeatletter
\usepackage{comment}
\let\wfs@comment@comment\comment
\let\comment\@undefined

\usepackage{changes}
\let\wfs@changes@comment\comment
\let\comment\@undefined

\newcommand\comment{%
    \ifthenelse{\equal{\@currenvir}{comment}}
    {\wfs@comment@comment}
    {\wfs@changes@comment}%
}

%\usepackage{changes}
%added-deleted-replaced
\definechangesauthor[name=Daniele, color=red]{DAN}
\definechangesauthor[name=Massimo, color=violet]{MAX}
\definechangesauthor[name=Giovanni, color=blue]{GIO}
\usepackage{lineno}
%\linenumbers
%\evensidemargin=0.1cm
%\oddsidemargin=0.1cm
%\textwidth=15cm

% page layout

%\usepackage[notcite, notref]{showkeys}
\newtheorem{theorem}{Theorem}
\newtheorem*{theorem*}{Main Theorem}
\newtheorem{proposition}[theorem]{Proposition}
\newtheorem{lemma}[theorem]{Lemma}

{\theoremstyle{definition}
\newtheorem*{definition*}{Definition}

\newtheorem{remark}[theorem]{Remark}

}

\newcommand{\fqn}{\mathbb{F}_{q^n}}
\newcommand{\fq}{\mathbb{F}_{q}}

\title{Towards the classification of exceptional scattered polynomials}
\author{Daniele Bartoli\thanks{Dipartimento di Matematica e Informatica, Universit\`a degli studi di Perugia,  Perugia, Italy. daniele.bartoli@unipg.it},
Massimo Giulietti\thanks{Dipartimento di Matematica e Informatica, Universit\`a degli studi di Perugia,  Perugia, Italy. massimo.giulietti@unipg.it}, and
Giovanni Zini\thanks{Dipartimento di Scienze Fisiche, Informatiche e Matematiche, Universit\`a degli Studi di Modena e Reggio Emilia, Modena, Italy.
giovanni.zini@unimore.it}.
}
\date{\today}

\begin{document}

\maketitle

\begin{abstract}
    Scattered polynomials over finite fields attracted an increasing attention in the last years. One of the reasons is their deep connection with Maximum Rank Distance (MRD) codes. Known classification results for exceptional scattered polynomials, i.e. polynomials which are scattered over infinite field extensions, are limited to the cases where their index $\ell$ is small, or a prime number larger than the $q$-degree $k$ of the polynomial, or an integer smaller than the $k$ in the case where $k$ is a prime. In this paper we completely classify exceptional scattered polynomials when the maximum between $\ell$ and $k$ is odd, and give partial results when it is even, extending a result of Ferraguti and Micheli in 2021.
\end{abstract}

\noindent {\bf MSC:} 11T06.\\
\noindent {\bf Keywords:} linearized polynomials, scattered polynomials, MRD codes, transitive finite linear groups.

\section{Introduction}

For $q$ a prime power and $n$ a positive integer, let $\mathbb{F}_{q^n}$ be the finite field with $q^n$ elements. A $q$-linearized polynomial $f(x)=\sum_{i=0}^{n-1} a_i x^{q^i}\in\mathbb{F}_{q^n}[x]$ is said to be \emph{scattered} of index $\ell\in\{0,\ldots,n-1\}$ over $\mathbb{F}_{q^n}$ if, for any $y,z\in\mathbb{F}_{q^n}^*$,
\begin{equation}\label{eq:scatt}
\frac{f(y)}{y^{q^\ell}}=\frac{f(z)}{z^{q^\ell}} \Longrightarrow \frac{y}{z}\in \mathbb{F}_{q};
\end{equation}
see \cite{MR3812212,MR3543528}. The $q$-degree of a linearized polynomial $f(x)=\sum_{i=0}^{n-1} a_i x^{q^i}$ is defined as $\max\{i : a_i\neq 0 \}$.  For a scattered polynomial $f(x)$ of index $\ell$ and $q$-degree $k$ we let $d:= \max\{k,\ell\}$.

Scattered polynomials $f(x)\in\mathbb{F}_{q^n}[x]$ are connected with \emph{scattered $\mathbb{F}_q$-subspaces} with respect to a Desarguesian spread. Recall that an \emph{$n$-spread} of the $r$-dimensional vector space $V$ over $\fqn$ is a set of $n$-dimensional $\fq$-subspaces of $V$ covering $V$ and pairwise intersecting trivially.
An $\fq$-subspace $U$ of $V$ is \emph{scattered} w.r.t. a spread $S$ if $U$ meets every element of $S$ in an $\fq$-space of dimension at most $1$.
A Deasarguesian spread of $V$ arises by applying a field reduction to the vectors of $V$; see \cite{MR3329986}. 
For a scattered polynomials $f(x)\in\mathbb{F}_{q^n}[x]$ of index $\ell$, the set 
\[
U_f=\{(x^{q^\ell},f(x))\colon x\in\mathbb{F}_{q^n}\}
\]
is a scattered $\mathbb{F}_q$-subspace of $\mathbb{F}_{q^n}\times\mathbb{F}_{q^n}$ w.r.t. a Desarguesian spread; see \cite{MR3812212,MR3543528}.

Scattered $\mathbb{F}_q$-subspaces  have  applications in different areas of mathematics, such as translation hyperovals \cite{MR1289081}, translation caps in affine spaces \cite{MR3800841}, two-intersection sets \cite{MR1772206}, blocking sets \cite{MR1790075}, translation spreads of the Cayley
generalized hexagon \cite{MR3403178}, finite semifields \cite{zbMATH06536325}, and graph theory \cite{MR818812}. Of particular interest is the connection between scattered subspaces and linear codes, namely MRD codes \cite{MR4143090,zini2021scattered,MR3543528}, whose relevance in communication theory relies on their applications to random linear network coding \cite{MR2450762} and cryptography \cite{gabidulin1991ideals}.

%\textcolor{red}{As a definition si usa quando la definizione è nuova. qui non mi pare che lo sia}.
A scattered polynomial (of a certain index $\ell$) over $\mathbb{F}_{q^n}$ is said to be \emph{exceptional} if it is scattered  (with respect to the same index $\ell$) over infinitely many extensions $\mathbb{F}_{q^{nm}}$ of $\mathbb{F}_{q^n}$.

While several families of scattered polynomials have been constructed in recent years, only two families of exceptional polynomials are known so far and can be described as follows:
\begin{itemize}
    \item monomials of so-called pseudoregulus type,  $f(x)=x^{q^s}$ of index $0$, with $\gcd(s,n)=1$, see \cite{MR3199027};
    \item binomials of so-called LP type (named after Lunardon and Polverino who introduced them in \cite{MR1749427}), $f(x)=x+\delta x^{q^{2s}}$ of index $s$, with $\gcd(s,n)=1$ and $\mathrm{N}_{q^n/q}(\delta)\ne1$, see \cite{MR1749427}.
\end{itemize}

The natural problem of classifying exceptional scattered polynomials has been solved only for index $0$, $1$, and $2$ (see \cite{MR4190573,MR3812212}), and for prime values of $d$ (see \cite{MR4163074}).

In the investigation of exceptional scattered polynomials of index $\ell$, we can assume that a $q$-linearized polynomial $f(x)\in\mathbb{F}_{q^n}[x]$ is $\ell$-normalized, in the following sense (see \cite[Remark 4.1]{bartoli2022investigating}):
\begin{itemize}
    \item[(i)] the  $q$-degree $k$ of $f(x)$ is smaller than $n$;
    \item[(ii)] $f(x)$ is monic;
    \item[(iii)] the coefficient of $x^{q^\ell}$ in $f(x)$ is zero; 
    \item[(iv)] if $\ell>0$, then the coefficient of $x$ in $f(x)$ is nonzero, i.e. $f(x)$ is separable.
\end{itemize}

In this paper we completely classify scattered polynomials when $d=\max\{k,\ell\}$ is odd, combining a method introduced by Ferraguti and Micheli in \cite{MR4163074} with a classical result from Hering. A partial classification is also obtained when $d$ is even.
Our standpoint will be that of the theory of  Galois extensions of function fields. Our main result is the following. 

\begin{theorem*}
Let $f(x)\in\mathbb{F}_{q^n}[x]$  be an $\ell$-normalized $\mathbb{F}_q$-linearized polynomial of $q$-degree $k$, and let  $d:= \max\{k,\ell\}$. If $d$ is odd, then $f(x)$ is not exceptional scattered of index $\ell$ unless $f(x)$ is a monomial of pseudoregulus type.
\end{theorem*}

Main Theorem will be proved in Section \ref{sec:proof}. To this aim, we make use of the classification of transitive finite linear groups (Section \ref{sec:hering}), and their embeddings in higher dimensional linear groups (Section \ref{Sect:EmbeddingIssue}). Finally we give in Section \ref{sec:partial} some partial classification results also when $d$ is even.

\section{A result from Hering}\label{sec:hering}

Throughout the paper, $f(x)\in\mathbb{F}_{q^n}[x]$ is an $\ell$-normalized exceptional scattered polynomial of $q$-degree $k$, and $d=\max\{k,\ell\}$. Also, $\Gamma \mathrm{L}_{q}(a,q^b)$ denotes the subgroup of $\Gamma \mathrm{L}(a,q^b)$ defined as   $\Gamma \mathrm{L}_{q}(a,q^b)=\mathrm{GL}(a,q^b)\rtimes \mathrm{Aut}(\mathbb{F}_{q^b}:\mathbb{F}_q)$.

As usual, $\mathrm{Sp}(e,q)$, with $e$ even, denotes the symplectic group, i.e. the subgroup of ${\rm GL}(e,q)$ preserving a given non-degenerate alternating bilinear form. In terms of matrices, $\mathrm{Sp}(e,q)$ is made by the matrices $A\in{\rm GL}(e,q)$ such that $AHA^{\top}=H$, where $H$ is a given invertible skew-symmetric matrix.
For the rest of the paper, we fix
\begin{equation}\label{eq:symp}
H=\begin{pmatrix} 0 & I_{e/2} \\ -I_{e/2} & 0 \end{pmatrix}\in{\rm GL}(e,q),
\end{equation}
where $I_{e/2}$ denotes the identity matrix of size $e/2$.

Also, $G_2(q)$, with $q$ even, denotes the Cartan-Dickson-Chevalley exceptional group in its representation as a subgroup of $\mathrm{Sp}(6,q)$. For $q>2$, $G_2(q)$ is simple; for $q=2$, $G_2(2)$ contains the simple group $G_2(2)^{\prime}\cong{\rm PSU}(3,3)$ with index $2$.

The classical groups appearing in points 1. to 3. of Theorems \ref{th:hering} and \ref{th:hering_here} act on $V$ in their natural representations.

\begin{theorem}\label{th:hering}
Let $p$ be a prime, $q=p^a$, $d\in\mathbb{N}$, $V={\mathbb F}_q^d$, and $G$ be a subgroup of $\mathrm{GL}(d,q)$ acting transitively on $V\setminus\{\mathbf{0}\}$.
%Assume that $p^{ad}\ne 9$.
Then one of the following holds: %\textcolor{red}{l'ultima condizione non riguarda $G$}.
\begin{enumerate}
    \item $\mathrm{SL}(e,q^{d/e})\triangleleft G \leq \Gamma \mathrm{L}_{q}(e,q^{d/e})$ for some $e\mid d$;
    \item $\mathrm{Sp}(e,q^{d/e})\triangleleft G \leq \Gamma \mathrm{L}_{q}(e,q^{d/e})$ for some even $e\mid d$ with $e\geq4$;
    \item $G_2(2^{d/6})^\prime \triangleleft G\leq \Gamma \mathrm{L}_{q}(6,2^{d/6})$, where $p=2$ and $6\mid d$;
    \item $q^d\in\{5^2,7^2,11^2,23^2,29^2,59^2,2^4,3^4,3^6\}$.
\end{enumerate}
\end{theorem}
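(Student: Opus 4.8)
The plan is to recognize this statement as Hering's classification of transitive linear groups, whose complete proof ultimately rests on the Classification of Finite Simple Groups (CFSG); accordingly I would organize the argument as a chain of reductions culminating in an almost-simple analysis. First I would observe that transitivity of $G$ on $V\setminus\{\mathbf{0}\}$ forces $G$ to act irreducibly on $V$: a proper nonzero $G$-invariant subspace would contain a $G$-orbit of nonzero vectors strictly smaller than $V\setminus\{\mathbf{0}\}$, contradicting transitivity. Next I would pass to the finest possible field-extension structure. Irreducibility together with Schur's lemma identifies the commuting algebra $\mathrm{End}_G(V)$ with a field $\mathbb{F}_{q^{d/e}}$, over which $V$ becomes an $e$-dimensional space with $e\mid d$ and $G\leq \Gamma\mathrm{L}_q(e,q^{d/e})$. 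Taking this field maximal reduces everything to the case in which $\mathbb{F}_{q^{d/e}}$ is already the full field of scalars, which is exactly why the conclusion is phrased via $e\mid d$ and the natural representations of $\mathrm{SL}(e,\cdot)$, $\mathrm{Sp}(e,\cdot)$, and $G_2(\cdot)$.

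The heart of the argument is an analysis of the generalized Fitting subgroup $F^*(G)$, carried out via Clifford theory applied to a minimal normal subgroup. I would split into two regimes. In the first, $F^*(G)$ is solvable, hence equals the Fitting subgroup $F(G)$, which under the primitivity obtained above must be of symplectic type, i.e. a central product of a cyclic group with an extraspecial $r$-group for some prime $r$. Computing the normalizer of such a group inside $\mathrm{GL}$ and imposing divisibility of $|G|$ by $q^d-1$ constrains the parameters so severely that only sporadic values of $q^d$ survive; these are precisely the configurations governed by normalizers of quaternion, $\mathrm{SL}(2,3)$, $\mathrm{SL}(2,5)$, or extraspecial groups (for instance the classical $A_7\leq\mathrm{GL}(4,2)$ example and the binary-icosahedral examples in dimension $2$), all of which are absorbed into item 4.

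In the second regime, $F^*(G)=Z(G)\cdot L$ with $L$ quasi-simple. Here I would invoke CFSG to run through the quasi-simple groups and their low-dimensional projective representations, testing for each candidate whether the relevant normalizer can be transitive on $V\setminus\{\mathbf{0}\}$ by comparing orders against $q^d-1$ and analyzing orbit lengths on vectors. The transitivity requirement eliminates every family preserving a nondegenerate Hermitian or quadratic form, since unitary and orthogonal groups have more than one orbit on nonzero vectors; what remains are the natural actions of $\mathrm{SL}$, of the symplectic groups $\mathrm{Sp}$ (every nonzero vector being isotropic for an alternating form), and, in characteristic $2$, of the exceptional group $G_2$ inside $\mathrm{Sp}(6,\cdot)$, together with finitely many further sporadic candidates (such as $\mathrm{SL}(2,13)$ on $\mathbb{F}_3^6$) that again fall under item 4.

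The decisive obstacle is the quasi-simple case: there is no elementary way to bound the possibilities, and the argument genuinely requires CFSG together with detailed information on minimal faithful representations, outer automorphism groups, and orders of the finite simple groups, plus careful orbit counting to discard spurious candidates. The solvable case, by contrast, is essentially self-contained modulo the classical determination of extraspecial normalizers. Since this result is due to Hering and enters the present work only as an external ingredient, the honest course is to rely on it as a quoted theorem rather than to reproduce the CFSG-based classification here.
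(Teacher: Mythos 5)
There is a genuine gap: the statement here is \emph{not} verbatim Hering's theorem as it appears in the literature, and your plan to ``rely on it as a quoted theorem'' skips exactly the part that the paper actually proves. As the paper itself points out, the published summaries of Hering's classification (e.g.\ the reference cited in the paper) are stated only for $q=p$ a prime. Applying that version to $G\leq \mathrm{GL}(d,q)\leq \mathrm{GL}(ad,p)$ yields conclusions of the shape $\mathrm{SL}(e',p^{ad/e'})\triangleleft G\leq \Gamma\mathrm{L}(e',p^{ad/e'})$ with $e'\mid ad$; to reach the statement as printed one must still show that the relevant field contains $\mathbb{F}_q$, so that its size is $q^{d/e}$ with $e\mid d$, \emph{and} that the field automorphisms induced by $G$ fix $\mathbb{F}_q$ pointwise, i.e.\ $G\leq \Gamma\mathrm{L}_q(e,q^{d/e})$ rather than merely $\Gamma\mathrm{L}(e,q^{d/e})$. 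This is the entire content of the paper's proof: it takes Hering's maximal field $L\subseteq \mathrm{Hom}(V,V)$ normalized by $G$, invokes the uniqueness of $L$ (Hering's Lemma 5.2, apart from one excluded configuration), observes that $G$ centralizes the set $\mathcal{F}_q$ of $\mathbb{F}_q$-scalar maps, and concludes $\mathcal{F}_q\subseteq L$, whence $|L|=q^{d/e}$ with $e\mid d$ and $G\leq \Gamma\mathrm{L}_q(e,q^{d/e})$. The subscript $q$ is not cosmetic: the later arguments (the embedding analysis of Section \ref{Sect:EmbeddingIssue}, the block-diagonal form of the image of Frobenius, and Propositions \ref{prop:SL} and \ref{prop:SL2}) depend on it.

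A secondary but real flaw is your reduction step. You identify the field over which $G$ becomes semilinear with the commuting algebra $\mathrm{End}_G(V)$ via Schur's lemma; but $G$ \emph{centralizes} $\mathrm{End}_G(V)$, so this field only yields $G\leq \mathrm{GL}(e,q^{d/e})$, a linear (not semilinear) containment, and it cannot be ``taken maximal'' --- it is what it is. The semilinear overgroup arises from the maximal field \emph{normalized} (not centralized) by $G$, which may be strictly larger: for $G=\Gamma\mathrm{L}_q(1,q^d)$ acting on $\mathbb{F}_{q^d}$ the commuting algebra is just $\mathbb{F}_q$, yet $G$ belongs to case 1 with $e=1$; your identification would instead demand $\mathrm{SL}(d,q)\triangleleft G$, which is false on order grounds. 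Your outline of the internal structure of Hering's proof (irreducibility, Clifford theory, $F^*(G)$ of symplectic type versus quasi-simple, CFSG run-through with orbit counting) is a fair description of the Hering--Liebeck argument over the prime field, and the paper quite deliberately does not reproduce it; citing it is fine, but the proof you were asked for is the bridging argument from the prime-field statement to the prime-power statement with $\Gamma\mathrm{L}_q$, and that is missing from your proposal.
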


\begin{proof}
The proof essentially goes back to Hering's papers  \cite{MR335659} and \cite{MR780488}, which build on his previous work \cite{MR224696}.
Up to our knowledge, in the literature Hering's results have been summarized as in Theorem \ref{th:hering}  only for $q=p$ a prime (for instance see \cite[Theorem 69.7]{MR2290291}), but in fact the case $q$ a prime power can be easily dealt with.

To see this, use the same notations of \cite[Section 5]{MR335659}. Let $L$ be a subset of ${\rm Hom}(V,V)$ maximal with respect to the following conditions: $L$ is normalized by $G$, $L$ contains the identity, and $L$ is a field with respect to the addition and multiplication in ${\rm Hom}(V,V)$.
Then $V$ is an $L$-vector space with scalar multiplication $\alpha v:= \alpha(v)$ for any  $\alpha\in L$ and $v\in V$.
By \cite[Lemma 5.2]{MR335659}, up to excluding a specific case (namely $n=2$, $p=3$, and $|L|=9$), $L$ is uniquely defined.

Clearly, $G$ normalizes the set $\mathcal{F}_q:=\{\tau_\lambda\colon \lambda\in\mathbb{F}_q\}$, where $\tau_\lambda$ is defined by $v\mapsto \lambda v$ for any $v\in V$, and $\mathcal{F}_q$ is a field isomorphic to $\mathbb{F}_q$, with the operations of ${\rm Hom}(V,V)$.
Therefore $L$ contains $\mathcal{F}_q$ and has size $q^{d/e}$ for some divisor $e$ of $d$. With the notation of \cite[Section 5]{MR335659}, we have $m=a\cdot\frac{d}{e}$ and $n^*=e$.

Then, as pointed out at the beginning of \cite[Section 5]{MR335659}, $G\leq \Gamma \mathrm{L}_{q}(e,q^{d/e})$, and the arguments of Hering's papers yield the claim. 
\end{proof}

\section{Embedding $\mathrm{\Gamma L}_q(n,q^m)$ in $\mathrm{GL}(nm,q)$}
\label{Sect:EmbeddingIssue}

For a positive integer $m$, consider the field extension $\mathbb F_{q^{m}}:\mathbb F_q$. Let $V$ be an $n$-dimensional vector space over $\mathbb F_{q^{m}}$. Then clearly $V$ is an $nm$-dimensional vector space over $\mathbb F_q$ and any $\mathbb F_{q^m}$-linear automorphism of $V$ is also 
an $\mathbb F_{q}$-linear automorphism. This provides the so-called {\em natural embedding} of the group of 
$\mathbb{F}_{q^m}$-linear automorphisms of $V$ into the group of  $\mathbb F_{q}$-linear automorphisms of $V$. 
%We denote such an embedding by $\eta$.

If a basis $\mathcal A$ of $V$ over $\mathbb F_{q^m}$ and a basis $\mathcal C$ of $V$ over $\mathbb F_q$ are fixed, then clearly the natural embedding induces an embedding $\eta_{\mathcal A,\mathcal C}$ of $\mathrm{GL}(n,q^m)$ in $\mathrm{GL}(nm,q)$, which is again callled a natural embedding. Explicitly, for $T\in \mathrm{GL}(n,q^m)$, the matrix $\eta_{\mathcal A,\mathcal C}(T)$ acts on a vector $x$ in $\mathbb F_q^{nm}$ as follows:
first, consider the vector $v\in V$ such that $x=(v)_\mathcal C$ (i.e. the vector of coordinates of $v$ over the basis $\mathcal C$); then
let $y=(v)_{\mathcal A}\in \mathbb F_{q^m}^n$ 
and compute $z=Ty$;
let $w\in V$ be the vector  such that $z=(w)_\mathcal A$; finally, the image of $x$ by $\eta_{\mathcal A,\mathcal C}(T)$ is  $(w)_{\mathcal C}\in \mathbb F_{q}^{nm}$, that is 
\begin{equation}\label{eta_A,C}
    \eta_{\mathcal A,\mathcal C}(T)\cdot (v)_\mathcal C=(w)_{\mathcal C}.
\end{equation}

As far as the image of $\mathrm{GL}(n,q^m)$ in $\mathrm{GL}(nm,q)$ by a natural embedding is concerned, it is straightforward to check that different choices of bases produce conjugate subgroups of $\mathrm{GL}(nm,q)$.

If both $n$ and $m$ are odd, then any embedding of $\mathrm{GL}(n,q^m)$ in $\mathrm{GL}(nm,q)$ is actually natural (or, equivalently, all subgroups of $\mathrm{GL}(nm,q)$ which are isomorphic to $\mathrm{GL}(n,q^m)$ are conjugate). As we could not find a reference for this fact, we deduce it from  a result by Kantor.
%\comment[id=MAX]{Sto provando a cercarla perche' la cosa fra parentesi qualcuno deve averla detta}
%\comment[id=DAN]{Ma lo devi anche cercare?}, 

\begin{proposition}\label{Prop:EmbeddingGL}
Any embedding of $\mathrm{GL}(n,q^m)$, $nm>1$, $n,m$ odd, in $\mathrm{GL}(nm,q)$ is natural.
\end{proposition}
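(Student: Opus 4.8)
The plan is to reduce the statement to a representation-theoretic fact about the group $\mathrm{GL}(n,q^m)$ and then invoke the cited result of Kantor. An embedding of $\mathrm{GL}(n,q^m)$ into $\mathrm{GL}(nm,q)$ is the same datum as a faithful $\mathbb{F}_q$-representation of the abstract group $G:=\mathrm{GL}(n,q^m)$ on the $nm$-dimensional space $W:=\mathbb{F}_q^{nm}$, and the natural embedding is one such representation. Two embeddings give conjugate subgroups precisely when the corresponding $\mathbb{F}_q G$-modules are isomorphic. So the real content is: up to isomorphism there is a unique faithful $\mathbb{F}_q G$-module of dimension $nm$ (equivalently, every such module is $\mathbb{F}_q G$-isomorphic to the natural module $V=\mathbb{F}_{q^m}^n$ viewed over $\mathbb{F}_q$). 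The role of the parity hypothesis ($n,m$ both odd) is to eliminate the competing representations — in particular the Frobenius-twisted and contragredient (dual) variants — that would otherwise furnish nonconjugate embeddings.

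\medskip

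\emph{First} I would set up the module language carefully. Fix the natural module $V=\mathbb{F}_{q^m}^n$, which as an $\mathbb{F}_q$-space is $W$, and regard $\eta_{\mathcal A,\mathcal C}$ as the representation $\rho_0$ afforded by $V$. Given an arbitrary embedding $\iota\colon G\hookrightarrow\mathrm{GL}(nm,q)$, let $M_\iota$ denote $W$ endowed with the $\mathbb{F}_q G$-module structure $g\cdot w := \iota(g)w$. \emph{Second}, I would extend scalars to the algebraic closure $\overline{\mathbb{F}_q}$ (or to $\mathbb{F}_{q^m}$) and decompose $M_\iota\otimes_{\mathbb{F}_q}\overline{\mathbb{F}_q}$ into absolutely irreducible constituents for $\mathrm{GL}(n,q^m)$; by the modular representation theory of this group, the relevant low-dimensional irreducibles are the natural module $V$ and its Frobenius twists $V^{(q^i)}$, together with their duals, and tensor products thereof. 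An $\mathbb{F}_q$-form of dimension $nm$ corresponds to a Galois-stable (Frobenius-orbit-closed) sum of these constituents whose total dimension is $nm$. This is exactly the point at which Kantor's classification enters: I would cite it to enumerate the possible $\overline{\mathbb{F}_q}$-irreducible modules of the required dimension and their $\mathbb{F}_q$-rational forms.

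\medskip

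\emph{Third}, the parity argument. The natural embedding is afforded by the Frobenius orbit $\{V,V^{(q)},\dots,V^{(q^{m-1})}\}$ (an $\mathbb{F}_q$-irreducible of dimension $nm$). The potential alternatives come from the dual: the orbit of the contragredient $V^*$ gives a second candidate $\mathbb{F}_q$-form of the same dimension. I would show that when $n$ and $m$ are both odd these two coincide, or more precisely that $V^*$ lies in the same Frobenius orbit as (a determinant twist of) $V$, so that only one conjugacy class of embeddings survives; the determinant character obstruction that distinguishes $V$ from $V^*$ is killed by the oddness hypothesis. Concretely the element $-I\in\mathrm{GL}(n,q^m)$ and the behaviour of $\det$ under the relevant twists are what the parity controls, and these are the quantities I would track.

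\medskip

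The main obstacle is \emph{Third} — showing that no exotic $\mathbb{F}_q$-form of dimension $nm$ exists beyond the natural one, and in particular ruling out the dual/contragredient giving a genuinely nonconjugate embedding. This requires knowing that the only absolutely irreducible $\mathrm{GL}(n,q^m)$-modules small enough to fit into dimension $nm$ over $\mathbb{F}_q$ are (Frobenius twists of) $V$ and $V^*$, and then arguing that under the odd-$n$, odd-$m$ hypothesis these assemble into a single $\mathbb{F}_q$-isomorphism class. I expect to lean entirely on Kantor's result to supply the classification of the constituents and to handle the rationality/Galois-descent bookkeeping, so that the parity of $n$ and $m$ is the only genuinely new input needed to force uniqueness.
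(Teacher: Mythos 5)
Your proposal misidentifies both the content of the cited input and the role of the parity hypothesis, and the step you defer to it is exactly the one that fails. The result of Kantor the paper uses is his theorem on \emph{linear groups containing a Singer cycle}: if $G\leq\mathrm{GL}(d,q)$ contains a cyclic subgroup of order $q^d-1$, then $G$ normalizes a naturally embedded $\mathrm{GL}(d/s,q^s)$ for some $s\mid d$. It is not a classification of low-dimensional modular representations of $\mathrm{GL}(n,q^m)$, so the enumeration of absolutely irreducible constituents on which your Second and Third steps depend has no source in your sketch; to supply it you would need Steinberg's twisted tensor product theorem, dimension bounds for irreducibles, an argument that faithfulness on the center $\mathbb{F}_{q^m}^*$ excludes twisted tensor product candidates (which do fit the dimension budget when $m\geq n$), and Galois-descent bookkeeping --- none of which you carry out, and you explicitly leave the decisive step open. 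The paper's actual proof is short and purely group-theoretic: the image $G$ of any embedding contains an element of order $q^{nm}-1$, i.e.\ a Singer cycle of $\mathrm{GL}(nm,q)$; Kantor then gives $\mathrm{GL}(nm/s,q^s)\trianglelefteq G$ (naturally embedded) for some $s\mid nm$; order counting forces $s\geq m$; the case $s=nm$ dies because the normalizer of $\mathrm{GL}(1,q^{nm})$ is $\mathrm{GL}(1,q^{nm})\rtimes\mathrm{Aut}(\mathbb{F}_{q^{nm}})$, which is too small; and the case $m<s<nm$ dies because $n,m$ odd forces $nm\neq 2s$, so Kantor's proof yields $G<\Gamma\mathrm{L}(nm/s,q^s)$, again too small by order. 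Thus the oddness hypothesis enters only to exclude the divisor configuration $nm=2s$ --- not through any determinant or duality obstruction, which is where you expected it.

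There is also a genuine error in your opening reduction. Conjugacy of image subgroups is \emph{not} equivalent to isomorphism of the $\mathbb{F}_qG$-modules: two embeddings $\iota_1,\iota_2$ have conjugate images whenever $\iota_2=c\circ\iota_1\circ\alpha$ with $\alpha\in\mathrm{Aut}(\mathrm{GL}(n,q^m))$, and precomposing with the inverse-transpose automorphism changes the module. Indeed, for $n\geq3$ the natural module $V$ and its dual $V^*$, restricted to $\mathbb{F}_q$, are nonisomorphic $\mathbb{F}_qG$-modules (one has $V^*\cong\Lambda^{n-1}V\otimes{\det}^{-1}$, which is not a Frobenius twist of $V$, even up to character twist), yet the corresponding images are conjugate subgroups: the contragredient image is the set of transposes of the natural image, and since multiplication operators are self-adjoint for the trace form of $\mathbb{F}_{q^{d/e}}$ one has $\varphi(a)^{\top}=P\varphi(a)P^{-1}$ for a fixed $P$, so conjugating blockwise by $\mathrm{diag}(P,\ldots,P)$ carries the transposed image back onto the natural one. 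Consequently the module-level uniqueness statement you set out to prove is false, and your hoped-for fix in step Third --- that $V^*$ is a determinant twist of a Frobenius twist of $V$ --- holds only for $n=2$, which is excluded here since $n$ is odd. The proposition is true only at the level of subgroups, which is precisely where the paper's Singer-cycle argument operates; as written, your plan would founder even if the representation-theoretic classification were supplied.
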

\begin{proof}
Let $\eta^{\prime}$ be any   embedding of $\mathrm{GL}(n,q^m)$ in $\mathrm{GL}(nm,q)$.  The group $G=\eta^{\prime}(\mathrm{GL}(n,q^m))$ contains a Singer cycle of $\mathrm{GL}(nm,q)$ (of order $q^{nm}-1$). By \cite{MR561126}, $G\trianglerighteq \mathrm{GL}(nm/s,q^s)$, embedded naturally for some divisor $s$ of $nm$. Clearly $s\geq m$, otherwise $|G|<|\mathrm{GL}(nm/s,q^s)|$.
\begin{itemize}
    \item Suppose $s=nm$. Then $\mathrm{GL}(1,q^{nm})$ is a normal subgroup of $G$. Hence $G$ is contained in the normalizer (in $\mathrm{GL}(nm,q)$) of $\mathrm{GL}(1,q^{nm})$, which by \cite[Sect.II.7]{MR0224703} is equal to $\mathrm{GL}(1,q^{nm})\rtimes \mathrm{Aut}(\mathbb{F}_q^{nm})$. But $|G| > | \mathrm{GL}(1,q^{nm})\rtimes \mathrm{Aut}(\mathbb{F}_q^{nm})|$, a contradiction. 
    \item Suppose that $m<s<nm$. Since, by assumption, both $n$ and $m$ are odd, $nm\neq 2s$. Following the proof of \cite[page 232]{MR561126}, $G < \Gamma L (nm/s,q^s)$. Thus $|\mathrm{GL}(n,q^m)|=|G| < |\Gamma L (nm/s,q^s)|$, a contradiction. 
\end{itemize}
Therefore $s=m$ and the embedding $\eta^{\prime}$ is natural.
\end{proof}

For the rest of the paper it is convenient to write explicitly a (natural) embedding of $\mathrm{GL}(n,q^m)$ into $\mathrm{GL}(nm,q)$.

Let $\gamma$ be a primitive element of $\mathbb F_{q^m}$, so that $\mathcal{B}=\{1,\gamma,\ldots, \gamma^{m-1}\}$  is an $\mathbb{F}_q$-basis  of $\mathbb{F}_{q^m}$.  Consider also an $\mathbb{F}_{q^m}$-basis $\mathcal{A}=\{\alpha_1,\ldots, \alpha_n\}$ of $\mathbb{F}_{q^{nm}}$. Clearly, the set  \begin{equation}\label{Eq:C}
    \mathcal{C}=\{\alpha_1,\alpha_1\gamma\ldots,\alpha_1\gamma^{m-1},\ldots,\alpha_n,\alpha_n\gamma,\ldots,\alpha_n\gamma^{m-1}\}
\end{equation} is an $\mathbb F_q$-basis of $\mathbb F_{q^{nm}}$.
We are going to describe $\eta_{\mathcal{A},\mathcal{C}}$ explicitly, as defined in \eqref{eta_A,C}.

%We now construct an embedding of $\mathrm{GL}(n,q^m)$ into $\mathrm{GL}(nm,q)$.
Consider the map  $\varphi: \mathbb{F}_{q^m}\to \mathbb{F}_q^{m\times m}$ which maps 
$a\in\mathbb{F}_{q^m}$ to the matrix whose $(i+1)$-th column consists of the components of $\gamma^i\cdot a$ over the basis $\mathcal B$, for $i=0,\ldots, m-1$. In more explicit terms,
if $C$ is the $m\times m$ matrix over $\mathbb F_q$ representing the $\mathbb F_q$-linear map of $\mathbb F_{q^m}$ $x\mapsto \gamma\cdot x$ with respect to $\mathcal B$, then 
\begin{equation}\label{Def:varphi}
\varphi(a)=\left(
\begin{array}{c|c|c|c}
&&&\\
 (a)_{\mathcal{B}}&C\cdot (a)_{\mathcal{B}}&\cdots&C^{m-1}\cdot (a)_{\mathcal{B}}\\
&&&
\end{array}
\right).
\end{equation}
It is well-known that $C$ is the companion matrix of the minimal polynomial of $\gamma$ over $\mathbb F_q$. Now, if $T=(a_{i,j})_{i,j=1,\ldots,n}\in \mathrm{GL}(n,q^m)$ then it is straightforward to check that $\eta_{\mathcal{A},\mathcal{C}}(T)=(\varphi(a_{i,j}))_{i,j=1,\ldots,n}\in GL(nm,q)$.

Let $nm$ be odd and consider an embedding $\zeta$ of $\Gamma \mathrm{L}_q(n,q^m)=\mathrm{GL}(n,q^m)\rtimes \mathrm{Aut}(\mathbb{F}_{q^m}: \mathbb{F}_q)$ in $\mathrm{GL}(nm,q)$. By Proposition \ref{Prop:EmbeddingGL}, $\zeta_{|\mathrm{GL}(n,q^m)}$ is a natural embedding. Therefore, there exists an inner automorphism $\beta$
of $GL(nm,q)$ such that $(\beta \circ \zeta)(T)=(\varphi(a_{i,j}))_{i,j=1,\ldots,n}$ for  
$T=(a_{i,j})_{i,j=1,\ldots,n}\in \mathrm{GL}(n,q^m)$ with $\varphi$ as in \eqref{Def:varphi}.

Let $M=(M_{i,j})_{i,j =1,\ldots,n}\in \mathrm{GL}(nm,q)$ be the image $(\beta \circ \zeta)(\phi)$ of the Frobenius map $\phi:a\mapsto a^q$  in $\mathrm{Aut}(\mathbb{F}_{q^m}: \mathbb{F}_q)$; here, $M_{i,j}$ is an $m\times m$ matrix over $\mathbb{F}_q$ for any $i,j=1,\ldots,n$. 
For any $a\in \mathbb{F}_{q^m}$ we have
$$diag(a,1,\ldots,1)\circ \phi=\phi \circ diag(a^{q^{m-1}},1,\ldots,1), $$
and hence
$$(\beta\circ \zeta)(diag(a,1,\ldots,1)\circ \phi)=(\beta\circ \zeta)(\phi \circ diag(a^{q^{m-1}},1,\ldots,1)).$$
Therefore
$$diag(\varphi(a),\varphi(1),\ldots,\varphi(1))\cdot M =M\cdot diag(\varphi(a^{q^{m-1}}),\varphi(1),\ldots,\varphi(1)), $$
that is 
{\scriptsize $$\left(\begin{array}{cccc}
\varphi(a)\cdot M_{1,1}&\varphi(a)\cdot M_{1,2}&\cdots & \varphi(a)\cdot M_{1,n}\\
\varphi(1)\cdot M_{2,1}&\varphi(1)\cdot M_{2,2}&\cdots & \varphi(1)\cdot M_{2,n}\\
\vdots&\vdots& & \vdots\\
\varphi(1)\cdot M_{n,1}&\varphi(1)\cdot M_{n,2}&\cdots & \varphi(1)\cdot M_{n,n}\\
\end{array}\right)
=
\left(\begin{array}{cccc}
 M_{1,1}\cdot \varphi(a^{q^{m-1}}) & M_{1,2}\cdot \varphi(1) &\cdots &  M_{1,n}\cdot \varphi(1)\\
 M_{2,1}\cdot \varphi(a^{q^{m-1}})& M_{2,2}\cdot \varphi(1)&\cdots &  M_{2,n}\cdot \varphi(1)\\
\vdots&\vdots& & \vdots\\
 M_{n,1}\cdot \varphi(a^{q^{m-1}})& M_{n,2}\cdot \varphi(1)&\cdots &  M_{n,n}\cdot \varphi(1)\\
\end{array}\right).
$$}

In particular, for each $i\neq 1$ and each $a\in \mathbb{F}_{q^m}^*$, $M_{1,i}\cdot \varphi(1)=\varphi(a)\cdot M_{1,i}$. Note that $\det(\varphi(a)-\varphi(b))\neq 0$ for each $a\neq b\in \mathbb{F}_{q^m}^*.$ Therefore $M_{1,i}=\overline{O}$ (the zero matrix) for $i=2,\ldots,n$. The same argument applies to each $M_{i,j}$ with $i\neq j$ and thus \begin{equation}\label{Eq:M}
    M=diag (\overline{M},\ldots,\overline{M}),
\end{equation} where $\overline{M}$ is provided by the (unique) embedding of $\Gamma \mathrm{L}_q(1,q^{m})$ into $\mathrm{GL}(m,q)$ associated with the fixed $\mathbb{F}_q$-basis $\mathcal{B}$ of $\mathbb{F}_{q^m}$; see  \cite[Sect.II.7]{MR0224703}.

Observe that $\Gamma \mathrm{L}_q(n,q^m)$ contains a unique subgroup isomorphic to $\mathrm{SL}(n,q^m)$; this can be shown by noting that, for any pair $(n,q^m)\ne(2,2),(2,3)$, the subgroup $\mathrm{SL}(n,q^m)$ is the last term $\Gamma \mathrm{L}_q(n,q^m)^{(\infty)}$ of the commutator series of $\Gamma \mathrm{L}_q(n,q^m)$.

\section{Proof of Main Theorem}\label{sec:proof}
%\textcolor{red}{Consider now the semilinear map $\varphi:\mathbb{F}_{q^m}^n\to \mathbb{F}_{q^m}^n$ given by $(x_1:\cdots:x_n) \mapsto (x_1^{q^m}:\cdots :x_n^{q^m})$. The matrix $\eta(\varphi)$ is given by 
%$$ \left(
%\begin{array}{c|c|c|c}
%D &\overline{0} &\cdots&\overline{0}\\
%\hline
%\overline{0}&D &\cdots&\overline{0}\\
%\hline
%\vdots&\vdots&&\vdots\\
%\hline
%\overline{0}& &\overline{0}&D\\
%\end{array}
%\right),$$
%where $D\in \mathbb{F}_q^{m\times m}$ is such that $(\beta_i^q)_{\mathcal{B}}=De_i$, and $\overline{0}\in \mathbb{F}_q^{m\times m}$ is the zero matrix. Any embedding $\overline{\eta}$ of  $\Gamma L(n,\mathbb{F}_{q^m})$ into $\mathrm{GL}(nm,\mathbb{F}_q)$ is up to conjugation provided by $\eta$ (STILL TO PROVE).}
In the rest of the paper, the notation on Galois extensions of global function fields is as in \cite[Section 2]{MR4163074}; see also \cite{MR4053386}.
For an $\ell$-normalized $\mathbb{F}_q$-linearized polynomial $f(x)\in\mathbb{F}_{q^n}[x]$, let $s$ be a transcendental over $\mathbb{F}_{q^n}$ and $S$ be the splitting field of $f(x)-sx^{q^{\ell}}\in\mathbb{F}_{q^n}(s)[x]$ over $\mathbb{F}_{q^n}(s)$.
For any positive integer $r$, denote by $S_r$ the compositum function field $S\cdot \mathbb{F}_{q^{nr}}$, by $k_r$ the field of constants of $S_r$, by $G_r^{\rm arith}$ and $G_r^{\rm geom}$ the arithmetic and the geometric Galois group of $S_r :\mathbb{F}_{q^{nr}}(s)$, and by $\varphi_r$ the isomorphism $G_r^{\rm arith}/G_r^{\rm geom}\to{\rm Gal}(k_r :\mathbb{F}_{q^{nr}})$.

As a direct application of Theorem \ref{th:hering} we have the following.

\begin{theorem}\label{th:hering_here}
Let $f(x)\in\mathbb{F}_{q^n}[x]$ be an $\ell$-normalized $\mathbb{F}_q$-linearized polynomial of $q$-degree $k<n$, and let $d:= \max\{k,\ell\}$. Then $f(x)$ is not exceptional scattered unless one of the following holds:
\begin{enumerate}
    \item $\mathrm{SL}(e,q^{d/e})\triangleleft G_r^{\rm geom} \triangleleft G_r^{\rm arith} \leq \Gamma \mathrm{L}_{q}(e,q^{d/e})$ for some $e\mid d$;
    \item $\mathrm{Sp}(e,q^{d/e})\triangleleft G_r^{\rm geom} \triangleleft G_r^{\rm arith} \leq \Gamma \mathrm{L}_{q}(e,q^{d/e})$, for some even $e\mid d$ with $e\geq4$;
    \item $G_2(2^{d/6})^\prime \triangleleft G_r^{\rm geom}\triangleleft G_r^{\rm arith}\leq \Gamma \mathrm{L}_{q}(6,2^{d/6})$, where $p=2$ and $6\mid d$;
    \item $q^d\in\{5^2,7^2,11^2,23^2,29^2,59^2,2^4,3^4,3^6\}$.
%    \item $(q,d)\in\{(2,4),(3,4),(3,6)\}$.
\end{enumerate}
\end{theorem}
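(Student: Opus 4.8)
The plan is to show that Theorem~\ref{th:hering_here} follows almost immediately from Theorem~\ref{th:hering} once we have identified the geometric and arithmetic Galois groups with transitive linear groups. The starting observation is that exceptional scatteredness can be recast as a statement about the Galois group of the splitting field $S$ of $f(x)-sx^{q^\ell}$ over $\mathbb{F}_{q^n}(s)$. First I would recall, following \cite{MR4163074}, that $f(x)$ is scattered of index $\ell$ over $\mathbb{F}_{q^{nr}}$ precisely when the associated $q$-polynomial map behaves injectively up to $\mathbb{F}_q$-scalars; translated into the function-field setting, this forces the relevant monodromy group to act on the nonzero points of the $\mathbb{F}_q$-vector space $V=\mathbb{F}_q^d$ (spanned by the roots, after the normalization reductions) \emph{transitively}. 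The crucial point is that exceptionality over infinitely many extensions $\mathbb{F}_{q^{nr}}$ implies that for some (indeed infinitely many) $r$ the geometric group $G_r^{\rm geom}$ already acts transitively on $V\setminus\{\mathbf 0\}$.

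The key steps, in order, are as follows. First I would make precise the identification of $G_r^{\rm geom}$ (and $G_r^{\rm arith}$) as subgroups of $\mathrm{GL}(d,q)$: since $f(x)-sx^{q^\ell}$ is an $\mathbb{F}_q$-linearized polynomial in $x$, its set of roots in $S$ forms an $\mathbb{F}_q$-vector space $V$ of dimension equal to the $q$-degree $d=\max\{k,\ell\}$ of the separable part, and the Galois action is $\mathbb{F}_q$-linear, giving a faithful embedding $G_r^{\rm arith}\hookrightarrow\mathrm{GL}(d,q)$ with $G_r^{\rm geom}$ as a normal subgroup. Second, I would invoke the characterization from \cite{MR4163074} that $f$ is exceptional scattered if and only if $G_r^{\rm geom}$ fails to be transitive on $V\setminus\{\mathbf 0\}$ for all large $r$; equivalently, if $f$ \emph{is} exceptional scattered, then no such transitivity occurs, so the contrapositive I am proving is: assuming $f$ is exceptional scattered and supposing toward a contradiction that the listed cases fail, derive transitivity and apply Hering. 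Concretely, the correct logical shape is that transitivity of the geometric group (which is what exceptionality prevents, or rather what the negation yields) lands us inside Theorem~\ref{th:hering}, whose conclusion enumerates exactly the four possibilities; the normality $G_r^{\rm geom}\triangleleft G_r^{\rm arith}$ and the containment in $\Gamma\mathrm{L}_q$ are then read off directly, with the classical normal subgroups $\mathrm{SL},\mathrm{Sp},G_2'$ being characteristic in their overgroups and hence normal in $G_r^{\rm arith}$ as well.

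The main obstacle, and the step deserving the most care, is correctly transferring the transitivity hypothesis between the geometric and arithmetic groups and tracking how it is forced by exceptionality over an infinite family of extensions. Specifically, one must argue that if $f$ were exceptional scattered yet none of cases 1.--4. held, then for infinitely many $r$ the group $G_r^{\rm geom}$ would act transitively on $V\setminus\{\mathbf{0}\}$ (this is precisely the content extracted from the scatteredness criterion combined with a Chebotarev/Lang--Weil type counting of $\mathbb{F}_{q^{nr}}$-rational points on the associated covers, as in \cite{MR4163074}), contradicting the classification. The subtlety is that Theorem~\ref{th:hering} is a statement about a transitive subgroup $G\le\mathrm{GL}(d,q)$, so I must ensure the group to which it is applied is genuinely $G_r^{\rm geom}$ and that the arithmetic group sits between the relevant normal subgroup and the full $\Gamma\mathrm{L}_q$; the normality $\mathrm{SL}(e,q^{d/e})\triangleleft G_r^{\rm geom}$ and $G_r^{\rm geom}\triangleleft G_r^{\rm arith}$ must both be verified, the latter being automatic since $G_r^{\rm geom}$ is the geometric Galois group and hence normal in the arithmetic one by construction.

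Finally I would collect the four cases, observing that they are a verbatim translation of the four conclusions of Theorem~\ref{th:hering} with the single group $G$ replaced by the chain $G_r^{\rm geom}\triangleleft G_r^{\rm arith}$ and the sporadic list of values $q^d$ carried over unchanged; no further work is needed beyond checking that each characteristic classical subgroup remains normal in the larger arithmetic group, which follows from the fact that $\mathrm{SL}$, $\mathrm{Sp}$, and $G_2'$ are characteristic subgroups of their normalizers inside $\Gamma\mathrm{L}_q(e,q^{d/e})$. This completes the deduction.
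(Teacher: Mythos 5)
Your overall route coincides with the paper's: Theorem~\ref{th:hering_here} is stated there precisely as ``a direct application'' of Theorem~\ref{th:hering}, exactly along the lines you sketch --- the roots of $f(x)-sx^{q^\ell}$ form an $\mathbb{F}_q$-space $V\cong\mathbb{F}_q^d$ (separability being guaranteed by $\ell$-normalization), the Galois action gives a faithful $\mathbb{F}_q$-linear embedding of $G_r^{\rm arith}$ into $\mathrm{GL}(d,q)$ with $G_r^{\rm geom}$ normal in it, transitivity of $G_r^{\rm geom}$ on $V\setminus\{\mathbf 0\}$ comes from the Ferraguti--Micheli framework \cite{MR4163074}, and Hering's classification produces the four cases. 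However, your second paragraph states the key input \emph{backwards}, and as written your proposal is self-contradictory: you claim that ``$f$ is exceptional scattered if and only if $G_r^{\rm geom}$ fails to be transitive on $V\setminus\{\mathbf 0\}$'' and that ``exceptionality prevents transitivity.'' The implication actually used (and the only true one) runs the other way: if $f$ is exceptional scattered, then for suitable $r$ the group $G_r^{\rm geom}$ \emph{does} act transitively on $V\setminus\{\mathbf 0\}$ --- your own first paragraph says this correctly --- and no equivalence of this kind appears in \cite{MR4163074}, nor is one needed. With the reversed statement you could never invoke Theorem~\ref{th:hering}, whose hypothesis is transitivity; there is also no proof by contradiction here: the deduction is direct, namely exceptionality $\Rightarrow$ transitivity of $G_r^{\rm geom}$ $\Rightarrow$ one of cases 1.--4., which is the contrapositive of the theorem as stated. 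This must be repaired before the argument is valid.

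A secondary point deserving an actual argument rather than the phrase ``characteristic subgroups of their normalizers'': having applied Hering to $G_r^{\rm geom}$, you get $\mathrm{SL}(e,q^{d/e})\triangleleft G_r^{\rm geom}\leq\Gamma\mathrm{L}_q(e,q^{d/e})$, but the theorem also asserts $G_r^{\rm arith}\leq\Gamma\mathrm{L}_q(e,q^{d/e})$. For this you need two facts: that the classical subgroup is characteristic in $G_r^{\rm geom}$ (for $\mathrm{SL}$ one can take the last term of the derived series, as the paper itself observes at the end of Section~\ref{Sect:EmbeddingIssue}), hence normal in $G_r^{\rm arith}$; and that its normalizer in $\mathrm{GL}(d,q)$ is exactly $\Gamma\mathrm{L}_q(e,q^{d/e})$, which uses irreducibility of $V$ as a module for the classical group (so the centralizer is the scalar field $\mathbb{F}_{q^{d/e}}^*$) and the fact that $V\not\cong V^*$ excludes conjugation realizing the graph automorphism when $e\geq 3$. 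Alternatively, and more cheaply, apply Hering directly to the transitive group $G_r^{\rm arith}$, obtaining $\mathrm{SL}(e,q^{d/e})\triangleleft G_r^{\rm arith}\leq\Gamma\mathrm{L}_q(e,q^{d/e})$, and then use perfectness of the classical subgroup together with the fact that $G_r^{\rm arith}/G_r^{\rm geom}$ is cyclic to conclude that the classical subgroup lies inside $G_r^{\rm geom}$, where its normality is inherited. Either completion fills the step you left as an invocation.
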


By \cite[Lemma 2.2]{MR4163074} there exists a constant $C>0$ depending on $S :\mathbb{F}_{q^n}(s)$ such that for any $r$ satisfying $q^{nr}>C$ the following property holds: every $\gamma\in G_r^{\rm arith}$ such that $\varphi_r(\gamma)$ is the Frobenius automorphism for the extension $k_r:\mathbb{F}_{q^{nr}}$ is also a Frobenius at an unramified place at finite of degree $1$ of $\mathbb{F}_{q^{nr}}(s)$.
For the rest of the paper, $r$ is assumed to satisfy $q^{nr}>C$. Under this assumption, $G_r^{\rm arith}\ne G_{r}^{\rm geom}$ holds; see \cite[Corollary 2.8]{MR4163074}.

\begin{remark}\label{Remark}
Let $\eta$ be any embedding of $\Gamma \mathrm{L}_q(e,q^{d/e})=\mathrm{GL}(e,q^{d/e})\rtimes \textrm{Aut}(\mathbb{F}_{q^{d/e}})=\mathrm{GL}(e,q^{d/e})\rtimes\langle \phi\rangle$ in $\mathrm{GL}(d,q)$. By Section \ref{Sect:EmbeddingIssue}, we can assume up to conjugation that 
$\eta_{|\mathrm{GL}(e,q^{d/e})}=\eta_{\mathcal{A},\mathcal{C}}$ for some $\mathbb{F}_{q^{d/e}}$-basis $\mathcal{A}$ of $\mathbb{F}_{q^{d}}$ and $\mathcal{C}$ as in \eqref{Eq:C}, such that $\eta(\phi)=M$ is as in \eqref{Eq:M}.
\end{remark}

\begin{proposition}\label{prop:SL}
Suppose that $d>2$ and that there exists $e\mid d$ such that $e>2$ and
$\mathrm{SL}(e,q^{d/e})\triangleleft G_r^{\rm geom}\triangleleft G_r^{\rm arith} \leq {\rm \Gamma L}_{q}(e,q^{d/e})$.
Then for any $\gamma \in G_r^{\rm arith}$
 there exists $\alpha\in G_r^{\rm geom} $ with  $ {\rm rank}(\eta(\alpha  \gamma)-\mathbb{I}_{d})<d-1$.
\end{proposition}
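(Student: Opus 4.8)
Write $m:=d/e$ and recall from Remark \ref{Remark} that, up to conjugation, $\eta$ restricts to the natural embedding $\eta_{\mathcal A,\mathcal C}$ on $\mathrm{GL}(e,q^{m})$ and sends the Frobenius $\phi$ to the block-diagonal matrix $M$ of \eqref{Eq:M}; since $d$ is odd, both $e$ and $m$ are odd, so this explicit description applies. The plan is to realise $\eta(\alpha\gamma)$ as the matrix, with respect to the $\mathbb{F}_q$-basis $\mathcal C$, of an $\mathbb{F}_q$-semilinear map of the natural module $\mathbb{F}_{q^{m}}^{e}$, and then to produce $\alpha$ so that this map fixes an $\mathbb{F}_q$-subspace of dimension at least $2$. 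This is exactly the condition $\mathrm{rank}(\eta(\alpha\gamma)-\mathbb{I}_d)\le d-2<d-1$.

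First I would exploit the $\Gamma\mathrm{L}$-structure: viewing $G_r^{\mathrm{arith}}\le \Gamma\mathrm{L}_q(e,q^{m})=\mathrm{GL}(e,q^{m})\rtimes\langle\phi\rangle$, write $\gamma=T\phi^{j}$ with $T\in\mathrm{GL}(e,q^{m})$ and $0\le j<m$. Because $\mathrm{SL}(e,q^{m})\triangleleft G_r^{\mathrm{geom}}$, every $\alpha\in\mathrm{SL}(e,q^{m})$ is an admissible choice, and $\alpha\gamma=(\alpha T)\phi^{j}$; as $\alpha$ runs through $\mathrm{SL}(e,q^{m})$, the matrix $U:=\alpha T$ runs through the whole coset $\{U\in\mathrm{GL}(e,q^{m}):\det U=\det T\}$. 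By the explicit form of $\eta$ we have $\eta(\alpha\gamma)=\eta_{\mathcal A,\mathcal C}(U)\,M^{j}$, and this matrix represents the semilinear map $g_U\colon v\mapsto U\,v^{(q^{j})}$ on $\mathbb{F}_{q^{m}}^{e}$, where $v^{(q^{j})}$ denotes the coordinatewise $q^{j}$-power. Thus $\dim_{\mathbb{F}_q}\ker(\eta(\alpha\gamma)-\mathbb{I}_d)$ equals the $\mathbb{F}_q$-dimension of the fixed space of $g_U$, and it suffices to make the latter at least $2$.

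The construction is then uniform. Choose $w_1,w_2\in\mathbb{F}_{q^{m}}^{e}$ linearly independent over $\mathbb{F}_{q^{m}}$, which is possible since $e>2$. Since $v\mapsto v^{(q^{j})}$ is an additive bijection preserving $\mathbb{F}_{q^{m}}$-linear independence, $w_1^{(q^{j})},w_2^{(q^{j})}$ are again independent, so there is $U\in\mathrm{GL}(e,q^{m})$ with $U w_i^{(q^{j})}=w_i$ for $i=1,2$; as $e-2\ge1$, we may prescribe the action of $U$ on a complement of $\langle w_1^{(q^{j})},w_2^{(q^{j})}\rangle$ so as to force $\det U=\det T$, keeping $U$ in the required coset. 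For $c_1,c_2\in\mathbb{F}_q$ one computes $g_U(c_1w_1+c_2w_2)=U(c_1 w_1^{(q^{j})}+c_2 w_2^{(q^{j})})=c_1w_1+c_2w_2$, using $c_i^{q^{j}}=c_i$; hence $g_U$ fixes the $2$-dimensional $\mathbb{F}_q$-space $\mathbb{F}_q w_1\oplus\mathbb{F}_q w_2$ pointwise, and the desired rank bound follows with $\alpha=U T^{-1}\in\mathrm{SL}(e,q^{m})\subseteq G_r^{\mathrm{geom}}$.

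I expect the only real obstacle to be bookkeeping rather than conceptual. One must check that the determinant constraint $\det U=\det T$ does not clash with the prescription $U w_i^{(q^{j})}=w_i$, which is precisely where the hypothesis $e>2$ enters, since it guarantees a nontrivial complement on which the determinant can be adjusted; and one must verify that $\eta_{\mathcal A,\mathcal C}(U)\,M^{j}$ really does correspond to the semilinear map $g_U$ under the identification $\mathbb{F}_{q^{m}}^{e}\cong\mathbb{F}_q^{d}$ induced by $\mathcal C$. Neither point requires the field-automorphism part $\phi^{j}$ to be trivial, so the argument handles the linear and the genuinely semilinear cases at once.
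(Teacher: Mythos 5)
Your proof is correct and follows essentially the same route as the paper: decompose $\gamma=T\phi^{j}$, use the coset $\{U : \det U=\det T\}$ afforded by $\mathrm{SL}(e,q^{d/e})\leq G_r^{\rm geom}$, and exploit $e>2$ to prescribe two images of the semilinear map while adjusting the determinant on a complement, so that $\eta(\alpha\gamma)$ fixes a $2$-dimensional $\mathbb{F}_q$-space and hence ${\rm rank}(\eta(\alpha\gamma)-\mathbb{I}_d)\leq d-2$. The only difference is presentational: the paper carries this out in explicit coordinates, prescribing the first two columns of a matrix $D\in\mathrm{SL}(e,q^{d/e})$ built from the columns of $(M^{j})^{-1}$ so that $M^{j}\eta(D)$ fixes the first and $(d/e+1)$-th standard basis vectors, whereas your formulation via $v\mapsto Uv^{(q^{j})}$ with arbitrary $\mathbb{F}_{q^{d/e}}$-independent $w_1,w_2$ is the same argument stated coordinate-free.
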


\begin{proof}
By Remark \ref{Remark}, it is enough to prove the claim for the case $\eta_{|\mathrm{GL}(e,q^{d/e})}=\eta_{\mathcal{A},\mathcal{C}}$ and $\eta(\phi)=M$, since the existence of $\alpha\in G_r^{\rm geom}$ with ${\rm rank}(\eta(\alpha  \gamma)-\mathbb{I}_{d})<d-1$ is invariant under conjugation in ${\rm GL}(d,q)$.

Write $\gamma =\beta\phi^{j} \in G_r^{\rm arith}$ with $\beta\in{\rm GL}(e,q^{d/e})$ and $j\in\{1,\ldots,e\}$, and write $B=\eta(\beta)$, so that $\eta(\gamma)=B\cdot M^{j}\in \mathrm{GL}(d,q)$. %Suppose that $G^{\rm geom}$ contains the subgroup $\overline{\eta}(\mathrm{SL}(e,q^{d/e}))=h\circ \eta(\mathrm{SL}(e,q^{d/e}))\circ h^{-1}$  for some $h \in \mathrm{GL}(d,\mathbb{F}_q)$. 
 We aim to determine $A=\eta(\alpha)\in \eta(\mathrm{SL}(e,q^{d/e}))$ such that ${\rm rank}(A\cdot B\cdot M^j-\mathbb{I}_{d})<d-1$. 
 Since $A\cdot B\cdot M^j=M^{j}\cdot \overline{A}\cdot \overline{B}$ for some $\overline{A}\in \eta(\mathrm{SL}(e,q^{d/e}))$,  $\overline{B}\in \eta(\mathrm{GL}(e,q^{d/e}))$, it is enough to find $\overline{A}\in \eta(\mathrm{SL}(e,q^{d/e}))$ such that ${\rm rank}(M^{j}\cdot \overline{A}\cdot \overline{B}-\mathbb{I}_{d})<d-1$. 
 
 Let $(x_{1}^{(1)},\ldots,x_{d/e}^{(1)}, \ldots, x_{1}^{(e)},\ldots,x_{d/e}^{(e)})$ and $(\overline{x}_{1}^{(1)},\ldots,\overline{x}_{d/e}^{(1)}, \ldots, \overline{x}_{1}^{(e)},\ldots,\overline{x}_{d/e}^{(e)})$ be respectively the first and the $(d/e+1)$-th column of $M^{e-j}=(M^j)^{-1}$ and let $y^{(i)}\in \mathbb{F}_{q^{d/e}}$, $i=1,\ldots,e$, and $\overline{y}^{(i)}\in \mathbb{F}_{q^{d/e}}$, $i=1,\ldots,e$, be such that $y^{(i)}_{\mathcal{B}}=(C^{i-1})^{-1}(x_{1}^{(i)},\ldots,x_{d/e}^{(i)})$ and $\overline{y}^{(i)}_{\mathcal{B}}=(C^{i-1})^{-1}(\overline{x}_{1}^{(i)},\ldots,\overline{x}_{d/e}^{(i)})$. 
 
 Consider a matrix $D\in \mathrm{SL}(e,q^{d/e})$ whose first two columns are 
    \begin{equation}\label{eq:trovainGL}
    (y^{(1)},y^{(2)}, \ldots, y^{(e)})=(y^{(1)},0,\ldots,0) \quad \textrm{ and }\quad  (\overline{y}^{(1)},\overline{y}^{(2)}, \ldots, \overline{y}^{(e)})=(0,\overline{y}^{(2)},0,\ldots,0)\end{equation}
 and such that $\det(D)=\det(\eta^{-1}(\overline{B}))$; such a matrix $D$ exists, because $e>2$.
 %, the other columns can be chosen in  a way that $\det(D)=1$ and therefore such  a matrix exists. 
 
 Now, consider the matrix $\overline{A}:=\eta(D)\cdot \overline{B}^{-1}\in\eta(\mathrm{SL}(e,q^{d/e}))$. It is readily seen that the first and the $(d/e+1)$-th column of 
 $M^{j}\cdot \overline{A}\cdot \overline{B}$ are  $$(1,0,\ldots,0) \quad   \textrm{ and } \quad  (0,0,\ldots,0,\underbrace{1}_{d/e+1},0,\ldots,0),$$ respectively.
 This shows that ${\rm rank}(M^{j}\cdot \overline{A}\cdot \overline{B}-\mathbb{I}_{d})\leq d-2$ and the claim follows.
\end{proof}

\begin{theorem}\label{th:main}
Let $f(x)\in\mathbb{F}_{q^n}[x]$ be an $\ell$-normalized $\mathbb{F}_q$-linearized polynomial of $q$-degree $k<n$, and let $d:= \max\{k,\ell\}$. Then $f(x)$ is not exceptional scattered unless one of the following holds:
\begin{itemize}
    \item $f(x)$ is a monomial of pseudoregulus type; or
    \item $G_r^{\rm arith}\not\leq \mathrm{\Gamma L}_q(1,q^{d})$ and $\mathrm{SL}(e,q^{d/e})\not\leq G_r^{\rm geom}$ for any divisor $e>2$ of $d$.
\end{itemize}
\end{theorem}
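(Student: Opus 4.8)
The plan is to keep the standing hypothesis that $f(x)$ is exceptional scattered and to prove the contrapositive of the stated dichotomy: if $f(x)$ is not a monomial of pseudoregulus type, then $G_r^{\rm arith}\not\leq \Gamma \mathrm{L}_q(1,q^{d})$ and $\mathrm{SL}(e,q^{d/e})\not\leq G_r^{\rm geom}$ for every divisor $e>2$ of $d$. Equivalently, I would show that each of the two forbidden configurations, namely (a) $G_r^{\rm arith}\leq \Gamma \mathrm{L}_q(1,q^{d})$ and (b) $\mathrm{SL}(e,q^{d/e})\leq G_r^{\rm geom}$ for some divisor $e>2$ of $d$, forces $f(x)$ to be of pseudoregulus type. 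The engine throughout is the dictionary between ranks of Frobenius elements and the scattered property. For $r$ with $q^{nr}>C$ and a degree-one unramified place of $\mathbb{F}_{q^{nr}}(s)$ at finite over $s=c$, the associated Frobenius $\gamma$ acts on the $d$-dimensional $\mathbb{F}_q$-space of roots of $g_c(x):=f(x)-c\,x^{q^{\ell}}$, and the $\mathbb{F}_q$-dimension of the $\mathbb{F}_{q^{nr}}$-rational roots of $g_c$ equals $\dim_{\mathbb{F}_q}\ker(\eta(\gamma)-\mathbb{I}_{d})=d-\mathrm{rank}(\eta(\gamma)-\mathbb{I}_{d})$. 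Hence $\mathrm{rank}(\eta(\gamma)-\mathbb{I}_{d})\leq d-2$ yields two $\mathbb{F}_q$-independent $y,z\in\mathbb{F}_{q^{nr}}^{*}$ with $f(y)/y^{q^{\ell}}=c=f(z)/z^{q^{\ell}}$, contradicting \eqref{eq:scatt} over $\mathbb{F}_{q^{nr}}$.

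For configuration (b), Theorem \ref{th:hering_here} shows that a copy of $\mathrm{SL}(e,q^{d/e})$ with $e>2$ inside $G_r^{\rm geom}$ can occur only in its first item, so the full chain $\mathrm{SL}(e,q^{d/e})\triangleleft G_r^{\rm geom}\triangleleft G_r^{\rm arith}\leq \Gamma \mathrm{L}_q(e,q^{d/e})$ holds, and $e>2$ forces $d>2$. I would then invoke Proposition \ref{prop:SL}: starting from any $\gamma_0\in G_r^{\rm arith}$ with $\varphi_r(\gamma_0)$ the Frobenius of $k_r:\mathbb{F}_{q^{nr}}$ (such $\gamma_0$ exists since $G_r^{\rm arith}\neq G_r^{\rm geom}$), the proposition produces $\alpha\in G_r^{\rm geom}$ with $\mathrm{rank}(\eta(\alpha\gamma_0)-\mathbb{I}_{d})<d-1$. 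As $\alpha\in G_r^{\rm geom}=\ker\varphi_r$, the element $\gamma:=\alpha\gamma_0$ still maps to the Frobenius under $\varphi_r$, so by \cite[Lemma 2.2]{MR4163074} it is a Frobenius at a degree-one unramified place at finite. The dictionary above then violates the scattered property over $\mathbb{F}_{q^{nr}}$ for every admissible $r$, contradicting exceptionality. Thus (b) cannot occur for exceptional scattered $f(x)$, and the corresponding implication holds vacuously.

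For configuration (a), which is exactly item 1 of Theorem \ref{th:hering_here} with $e=1$, the plan is to argue that the one-dimensional semilinear monodromy forces $f(x)$ to be a monomial. By the Hering setup the geometric group $G_r^{\rm geom}\leq \Gamma \mathrm{L}_q(1,q^{d})$ acts transitively on the $q^{d}-1$ nonzero roots of $g_s(x):=f(x)-s\,x^{q^{\ell}}$, so this root space is a single $\mathbb{F}_{q^{d}}$-line for the field $L\cong\mathbb{F}_{q^{d}}$ of Section \ref{sec:hering}, and the splitting field is generated by one root $\rho$. Writing $s=f(\rho)/\rho^{q^{\ell}}$ exhibits the cover as a degree-$q^{d}$ map of the projective line with metacyclic geometric monodromy; I would show that such a cover must be equivalent to the pure power map $\rho\mapsto\rho^{q^{d}-1}$, which forces $f(x)$ to be a monomial. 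Finally, condition (iv) of the $\ell$-normalization is incompatible with a monomial of positive $q$-degree once $\ell>0$, so $\ell=0$ and $f(x)=x^{q^{s}}$ with $s=d$; the scattered condition then gives $\gcd(s,n)=1$, i.e. $f(x)$ is of pseudoregulus type.

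I expect the main obstacle to be configuration (a): converting ``the geometric monodromy lies in $\Gamma \mathrm{L}_q(1,q^{d})$'' into the rigid conclusion that $f$ is a monomial. Unlike (b), where Proposition \ref{prop:SL} delivers the rank drop mechanically, here one must exclude every non-monomial $f(x)$ whose associated cover happens to carry a metacyclic monodromy, presumably via a genus/ramification analysis of $s=f(\rho)/\rho^{q^{\ell}}$ or by directly solving for the coefficients of $f$ compatible with a $\Gamma \mathrm{L}_q(1,q^{d})$-action. The degenerate range $d\leq 2$ (where Proposition \ref{prop:SL} does not apply, but where $e>2$ is impossible anyway) and the finite list in item 4 of Theorem \ref{th:hering_here} would be handled separately, the latter by direct inspection.
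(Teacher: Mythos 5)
Your treatment of configuration (b) is correct and is exactly the paper's argument: Theorem \ref{th:hering_here} upgrades $\mathrm{SL}(e,q^{d/e})\leq G_r^{\rm geom}$ with $e>2$ to the normal chain $\mathrm{SL}(e,q^{d/e})\triangleleft G_r^{\rm geom}\triangleleft G_r^{\rm arith}\leq\Gamma\mathrm{L}_q(e,q^{d/e})$, Proposition \ref{prop:SL} produces $\alpha\in G_r^{\rm geom}$ with ${\rm rank}(\eta(\alpha\gamma)-\mathbb{I}_d)<d-1$, and \cite[Theorem 2.7]{MR4163074} (with \cite[Lemma 2.2]{MR4163074} supplying the Frobenius at a degree-one unramified place) contradicts exceptionality; your rank--nullity dictionary translating the rank drop into two $\mathbb{F}_q$-independent solutions of $f(y)/y^{q^\ell}=c$ is precisely what that theorem encapsulates.

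Configuration (a), however, contains a genuine gap, and you flag it yourself. The pivotal step --- ``such a cover must be equivalent to the pure power map $\rho\mapsto\rho^{q^d-1}$, which forces $f(x)$ to be a monomial'' --- is asserted, not proved, and no rigidity statement of this form is available: having geometric monodromy inside the metacyclic group $\Gamma\mathrm{L}_q(1,q^d)$ does not by itself pin down the cover, and any proof must use the specific ramification of $f(x)-sx^{q^\ell}$. The paper's mechanism here is quantitative rather than structural: transitivity of $G_r^{\rm geom}$ on the $q^d-1$ nonzero roots gives $|G_r^{\rm geom}|=i(q^d-1)$ and $|G_r^{\rm arith}|=j(q^d-1)$ with $i\mid j\leq d$ (since $|\Gamma\mathrm{L}_q(1,q^d)|=d(q^d-1)$); the ramification index $|q^\ell-q^k|$ at the relevant place over a place of $\mathbb{F}_{q^{nr}}(s)$ must divide $i(q^d-1)$, whence $\frac{|q^k-q^\ell|}{q^{\gcd(k,\ell)}-1}$ divides $i\leq d$; and elementary estimates then force $k>q^{k/2}-1$ when $\ell<k=d$, respectively $\ell>q^{\ell/2}-1$ when $0<k<\ell=d$, both impossible, leaving only $k=0$ and $f(x)=x$, i.e.\ the pseudoregulus alternative. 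Your sketch names a ``genus/ramification analysis'' as a possible route but supplies neither of the two inputs that make the argument close --- the ramification datum $|q^\ell-q^k|$ dividing the inertia-constrained group order, and the bound $i\leq d$ coming from the one-dimensional semilinear ambient group --- so the main case of the theorem remains unresolved in your proposal. A smaller inaccuracy: the sporadic list in item 4 of Theorem \ref{th:hering_here} is not handled by ``direct inspection'' in the paper; the only value needing separate treatment, $q^d=9^3$, is dispatched by citing the prime-$d$ classification \cite[Theorem 1]{MR4163074}.
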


\begin{proof}
The case $q^d=9^3$ follows by \cite[Theorem 1]{MR4163074}.
Suppose that $f(x)$ is exceptional scattered and not of pseudoregulus type.
\begin{itemize}
\item Suppose that $\mathrm{SL}(e,q^{d/e})\leq G_r^{\rm geom}$ for some divisor $e>2$ of $d$.
By Theorem \ref{th:hering_here},
\[
\mathrm{SL}(e,q^{d/e})\triangleleft G_r^{\rm geom}\triangleleft G_r^{\rm arith}\leq \Gamma L_{q}(e,q^{d/e}).
\]
Then by Proposition \ref{prop:SL} there exist $\gamma\in G_r^{\rm arith}$ and $\alpha\in G_r^{\rm geom}$ such that ${\rm rank}(\eta(\alpha\gamma)-\mathbb{I}_d)<d-1$.
By \cite[Theorem 2.7]{MR4163074}, $f(x)$ is not exceptional scattered, a contradiction.
\item Suppose that 
\[
G_r^{\rm geom}\triangleleft G_r^{\rm arith} \leq \Gamma L_{q}(1,q^{d}).
\]
Now we argue as in \cite[Section 4]{MR4163074}. Since $f(x)-sx^{q^\ell}\in\mathbb{F}_{q^n}(s)[x]$ has exactly $q^d-1$ non-zero roots, the transitivity of $G_r^{\rm geom}$ on such roots implies $(q^d-1)\mid |G_r^{\rm geom}|$.
Thus, $|G_r^{\rm geom}|=i(q^d-1)$ and $|G_r^{\rm arith}|=j(q^d-1)$ with $1\leq i\mid j\leq d$.
As in \cite[Proof of Theorem 1.4]{MR4163074}, one gets that $|q^\ell-q^k|$ divides $i(q^d-1)$.

Suppose $\ell<k=d$. Then
\begin{equation}\label{eq:cond1}
\frac{q^k-q^\ell}{q^{\gcd(k,\ell)}-1}\,\Big|\, i<r.
\end{equation}
By considering separately $\ell\mid k$ or $\ell\nmid k$, one gets $k>q^{k/2}-1$, and hence, by direct computations, a contradiction to $\eqref{eq:cond1}$.

Then $k<\ell=d$ and
\[
\frac{q^\ell-q^k}{q^{\gcd(k,\ell)}-1}\,\Big|\, i<\ell.
\]
If $k>0$, then $\ell>q^{\ell/2}-1$ with $\ell>1$, whence a contradiction as above.
Then $k=0$, $(x^{q^\ell},f(x))=(x^{q^\ell},x)$, and thus $f(x)$ is a monomial, a contradiction.
\end{itemize}
\end{proof}

It is readily seen that Theorems \ref{th:hering_here} and \ref{th:main} prove Main Theorem.
They will be useful in the next section to provide partial results also when $d$ is even.

\section{Partial results for $d$ even}\label{sec:partial}

The notation in this section is the same as in Section \ref{sec:proof}. The following lemma is a simplified version of a well-known result by van der Waerden \cite{MR1513024}; see also \cite[Lemma 2.4]{MR4163074}.

\begin{lemma}\label{lem:van}
Let $f(x)\in\mathbb{F}_{q^n}[x]$ be an $\ell$-normalized $\mathbb{F}_q$-linearized polynomial of $q$-degree $k<n$, and $d:= \max\{k,\ell\}$.
Let $V$ be the set of $q^d$ roots of $f(x)-sx^{q^{\ell}}$, and $z\in V\setminus\{0\}$.
Let $P$ be a place of $k_r(s)$ and $\mathcal{Q}$ be the set of places of $k_r(s,z)$ lying over $P$.

Then $G_r^{\rm geom}$ has a subgroup $H$ whose set $\mathcal{S}$ of orbits on $V\setminus\{0\}$ is in bijection with $\mathcal{Q}$. If $Q\in\mathcal{Q}$ and $\mathcal{O}\in\mathcal{S}$ correspond each other in this bijection, then $e(Q|P)=|\mathcal{O|}$. 
\end{lemma}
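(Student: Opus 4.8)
The plan is to recast the statement as an instance of van der Waerden's description of decomposition and inertia groups as permutation groups, applied to the tower $K:=k_r(s)\subseteq M:=k_r(s,z)\subseteq L:=S_r$. The first thing I would record is that $M$ is the \emph{rational} function field $k_r(z)$: since $z\ne 0$ is a root of $f(x)-sx^{q^\ell}$ we have $s=f(z)/z^{q^\ell}\in k_r(z)$, whence $k_r(s,z)=k_r(z)$ and $[M:K]=q^d-1$, the number of nonzero roots. Consequently $G:=G_r^{\rm geom}=\mathrm{Gal}(L/K)$ permutes $V\setminus\{0\}$ transitively, the stabiliser $H_0:=\mathrm{Stab}_G(z)$ equals $\mathrm{Gal}(L/M)$ and has index $q^d-1$, and the orbit map identifies $V\setminus\{0\}$ with $G/H_0$ as a $G$-set. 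Thus any subgroup of $G$ acts on $V\setminus\{0\}$ exactly as it acts on $G/H_0$, and the places of $M$ over $P$ become a purely group-theoretic object via Galois theory.

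Next I would fix a place $\mathfrak P$ of $L$ above $P$ and let $I\trianglelefteq D\leq G$ be its inertia and decomposition groups; the subgroup claimed by the lemma will be $H:=I$. The classical van der Waerden correspondence says that the places $Q$ of $M$ over $P$ are in bijection with the orbits of $D$ on $G/H_0\cong V\setminus\{0\}$ (equivalently with the double cosets $H_0\backslash G/D$), the place attached to an orbit being induced on $M$ by the corresponding $G$-translate of $\mathfrak P$. To read off ramification I would localise: writing $K_P$ for the completion and using $M\otimes_K K_P\cong\prod_{Q\mid P}M_Q$, the minimal polynomial of $z$ splits according to the local extensions $M_Q/K_P$, and inside the block belonging to a fixed $Q$ the roots that are conjugate under inertia form clusters whose common size is the totally ramified local degree $e(Q|P)$. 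Because the residue fields are finite, hence perfect, every residue extension is separable and the inertia group measures precisely the ramification; this is what forces $|\mathcal O|=e(Q|P)$ for the orbit $\mathcal O$ lying over $Q$.

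The step I expect to be the genuine obstacle is reconciling the two indices, namely obtaining simultaneously a \emph{bijection} $\mathcal S\leftrightarrow\mathcal Q$ \emph{and} orbit size exactly $e(Q|P)$. The decomposition group yields the clean bijection but orbits of size $e(Q|P)f(Q|P)$, whereas the inertia group yields orbits of the correct size $e(Q|P)$ at the cost of producing $f(Q|P)$ of them over each $Q$; the two agree only once the residue degrees $f(Q|P)$ are understood. I would control this through the cyclic quotient $D/I$, which is the Galois group of the separable residue extension and is generated by a Frobenius lift: tracking its action organises the $I$-orbits lying over a common $D$-orbit, and it is exactly the triviality of the residue degree (to which the ``simplified'' form of the lemma specialises) that makes $\mathcal O\mapsto Q$ a bijection rather than a mere surjection. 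Verifying that this matching is well defined, and that the resulting correspondence is genuinely one-to-one, is the delicate bookkeeping; once it is in place the identity $e(Q|P)=|\mathcal O|$ is immediate from the completion computation of the previous paragraph.
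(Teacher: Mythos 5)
Your route is the right one, and in fact it is the only proof on offer: the paper does not prove Lemma \ref{lem:van} at all, but defers to van der Waerden \cite{MR1513024} and to \cite[Lemma 2.4]{MR4163074}, and the mechanism behind those references is precisely your decomposition/inertia argument, so your first two paragraphs reconstruct the intended proof. One small circularity should be repaired: you assert $[k_r(s,z):k_r(s)]=q^d-1$ ``the number of nonzero roots'', which presupposes the irreducibility/transitivity you then derive from it. The clean order is: $s=f(z)/z^{q^\ell}=\bigl(f(z)/z\bigr)/z^{q^\ell-1}$ is in lowest terms (separability of $f$ gives $z^2\nmid f(z)$), so $[k_r(z):k_r(s)]=\max(q^k,q^\ell)-1=q^d-1$; since $z$ is a root of the degree-$(q^d-1)$ polynomial $\bigl(f(x)-sx^{q^\ell}\bigr)/x$, that polynomial is the minimal polynomial of $z$, hence irreducible, hence $G_r^{\rm geom}$ is transitive on $V\setminus\{0\}$ and $V\setminus\{0\}\cong G/H_0$ as a $G$-set. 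Granting that, your identifications (places of $k_r(s,z)$ over $P$ $\leftrightarrow$ $D$-orbits, i.e.\ double cosets, and $|I\text{-orbit}|=|I|/|I\cap g^{-1}H_0g|=e(Q|P)$ via the completion and the separability of the residue extensions) are correct and standard.

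The genuine gap is exactly where you located it, but it is not ``deferred bookkeeping'': with $H=I$ one gets precisely $f(Q|P)$ orbits of size $e(Q|P)$ above each $Q$, permuted transitively by the cyclic quotient $D/I$, so the map $\mathcal{S}\to\mathcal{Q}$ is a bijection \emph{if and only if} every residue degree is $1$ --- and no hypothesis in the statement guarantees this, since $P$ is an arbitrary place of $k_r(s)$. The obstruction is unfixable as stated: the fundamental identity gives $\sum_{Q\mid P}e(Q|P)f(Q|P)=[k_r(s,z):k_r(s)]=q^d-1=\sum_{\mathcal{O}\in\mathcal{S}}|\mathcal{O}|$, so a bijection with $|\mathcal{O}|=e(Q|P)$ forces $f(Q|P)=1$ for all $Q\mid P$, which fails for general $P$ (Chebotarev already produces places of $k_r(s)$ whose extensions to the rational field $k_r(z)$ have nontrivial residue degree). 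So what your argument actually proves --- and what is true --- is the weaker statement: the orbits of the inertia subgroup surject onto $\mathcal{Q}$, with exactly $f(Q|P)$ orbits of size $e(Q|P)$ above each $Q$. You should say so explicitly rather than assume the lemma ``specialises'' to $f(Q|P)=1$. Note that this weaker form suffices for every use the paper makes of the lemma in Proposition \ref{prop:SL2}: there $P$ is the zero or the pole of $s$, the ramified places above $P$ are the rational places $z=0$ and $z=\infty$ (where $f(Q|P)=1$), and over the unramified places the inertia orbits are singletons whose total number $\sum_j f_j$ is still $q^k-1$, so the fixed-vector counts and orbit lengths invoked there are unaffected.
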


We are now able to give some additional information on Case 1. in Theorem \ref{th:hering_here}.

\begin{proposition}\label{prop:SL2}
Let $f(x)\in\mathbb{F}_{q^n}[x]$ be an $\ell$-normalized $\mathbb{F}_q$-linearized polynomial of $q$-degree $k<n$, and let $d:= \max\{k,\ell\}$.
Suppose that $f(x)$ is not a monomial. If there is a divisor $e$ of $d$ such that
\[\mathrm{SL}(e,q^{d/e})\triangleleft G_r^{\rm geom} \triangleleft G_r^{\rm arith} \leq \Gamma \mathrm{L}_{q}(e,q^{d/e}),\]
then $e=2$, $d=k$ and $\ell=k/2$.
\end{proposition}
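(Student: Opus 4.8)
The goal is to exclude every case with $e>2$ and, in the case $e=2$, to pin down the arithmetic relation $d=k$, $\ell=k/2$. The combinatorial engine is Lemma \ref{lem:van}: it translates ramification data of the place at infinity of $k_r(s)$ into the orbit structure of a subgroup of $G_r^{\rm geom}$ acting on the set $V\setminus\{0\}$ of $q^d-1$ nonzero roots. So the first step is to compute, from the explicit polynomial $f(x)-sx^{q^\ell}$, the ramification of the place $s=\infty$ in the extension $k_r(s,z):k_r(s)$, where $z$ is one fixed nonzero root. Since $f(x)-sx^{q^\ell}$ is $\mathbb{F}_q$-linearized, its nonzero roots form the punctured $\mathbb{F}_q$-space $V\setminus\{0\}$, and the Newton-polygon / valuation behaviour of the roots as $s\to\infty$ gives the ramification index $e(Q|P)$. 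Concretely, as $s\to\infty$ the equation degenerates so that the dominant balance is between the top term $x^{q^k}$ (if $k=d$) or $sx^{q^\ell}$ (if $\ell=d$) and the constant-type term; reading off $v_\infty(z)$ for a root $z$ yields the single ramification index $e$ governing $\infty$.

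\textbf{Key steps in order.}
First I would invoke Lemma \ref{lem:van} with $P$ the place $s=\infty$, obtaining a subgroup $H\leq G_r^{\rm geom}$ whose orbit sizes on $V\setminus\{0\}$ equal the ramification indices of the places over $P$. Second, because $\mathrm{SL}(e,q^{d/e})\triangleleft G_r^{\rm geom}$ acts on $V\cong\mathbb{F}_{q^{d/e}}^e$ in its natural representation, I would analyse the possible orbit structures of subgroups of $\Gamma\mathrm{L}_q(e,q^{d/e})$ on $V\setminus\{0\}$: the presence of the large normal subgroup $\mathrm{SL}(e,q^{d/e})$ forces orbits that are either very large or highly structured, and the divisibility constraints coming from $|H|$ must match the single ramification index read off at infinity. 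Third, I would combine this with the inequality already established in the proof of Theorem \ref{th:main}, namely that $|q^\ell-q^k|$ divides $i(q^d-1)$ where $|G_r^{\rm geom}|=i(q^d-1)$ and $i\leq d$; the point is that $e>2$ together with the natural $\mathrm{SL}(e,q^{d/e})$-action makes the root set transitive or nearly so, which is incompatible with the ramification at $\infty$ unless $e=2$. Once $e=2$ is forced, the group is $\mathrm{SL}(2,q^{d/2})$, and the two-dimensional geometry (a point stabiliser is a Borel, orbits on $V\setminus\{0\}$ have size $q^{d/2}-1$ or $q^d-q^{d/2}$) pins the ramification, and matching this against the valuations of the roots of $f(x)-sx^{q^\ell}$ should force $\ell=k/2$ and $d=k$; the case $d=\ell>k$ must be ruled out separately, presumably again via the divisibility bound $(q^\ell-q^k)/(q^{\gcd(k,\ell)}-1)\mid i<\ell$ leading to $\ell>q^{\ell/2}-1$, a contradiction for $\ell$ large, mirroring the final paragraph of the proof of Theorem \ref{th:main}.

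\textbf{The main obstacle.}
I expect the hard part to be the second step: extracting sharp enough orbit-size information for an arbitrary subgroup $H\leq G_r^{\rm geom}$ lying between $\mathrm{SL}(e,q^{d/e})$ and $\Gamma\mathrm{L}_q(e,q^{d/e})$, rather than for $G_r^{\rm geom}$ itself, since Lemma \ref{lem:van} only guarantees orbit sizes for \emph{some} subgroup $H$ and I do not control $H$ directly. The normality of $\mathrm{SL}(e,q^{d/e})$ helps, because each $H$-orbit is a union of $\mathrm{SL}(e,q^{d/e})\cap H$-orbits and these in turn are governed by the well-understood transitivity of $\mathrm{SL}(e,q^{d/e})$ on $V\setminus\{0\}$ when $e\geq 2$; but turning this into a clean contradiction for $e>2$ will require carefully tracking how the semilinear/Frobenius part of the group can split the natural transitive action into orbits whose sizes are forced to be a single value matching the ramification at $\infty$. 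A secondary difficulty is the explicit valuation computation for the roots of the linearized polynomial $f(x)-sx^{q^\ell}$ at $s=\infty$, which depends on whether $k=d$ or $\ell=d$ and on the interaction between $k$ and $\ell$; I anticipate that separating these subcases, together with the $\ell$-normalization hypotheses (i)--(iv), is where the condition $\ell=k/2$ ultimately crystallises.
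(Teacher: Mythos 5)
Your scaffolding (Lemma \ref{lem:van} plus orbit-size matching) points in the right direction, but the proposal has genuine gaps at exactly the places you flag as obstacles, and the paper resolves them by an idea absent from your plan: \emph{spread invariance}. Your proposed remedy for the uncontrolled subgroup $H$ of Lemma \ref{lem:van} --- that its orbits are unions of $H\cap\mathrm{SL}(e,q^{d/e})$-orbits ``governed by the transitivity of $\mathrm{SL}$'' --- is a non sequitur: normality of $\mathrm{SL}(e,q^{d/e})$ in $G_r^{\rm geom}$ gives no lower bound on $H\cap\mathrm{SL}(e,q^{d/e})$, which may well be trivial, so the transitivity of the ambient $\mathrm{SL}$ constrains nothing about $H$. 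The paper instead observes that $G_r^{\rm geom}\leq\eta(\Gamma\mathrm{L}_q(2,q^{d/2}))$ preserves the Desarguesian $\tfrac{d}{2}$-spread, hence \emph{every} subgroup $H$, whatever it is, preserves the partition of $V\setminus\{0\}$ into $(q^{d/2}-1)$-subsets; matching this partition against the ramification-derived orbit data is what yields the divisibilities $(q^{d/2}-1)\mid(q^k-1)$, resp.\ $(q^{d/2}-1)\mid(q^\ell-1)$, forcing $k=d/2$, resp.\ $\ell=d/2$. (Your guess that Borel orbit sizes $q^{d/2}-1$ and $q^d-q^{d/2}$ do the pinning has the right numerics but the wrong justification, since $H$ need not contain, or be related to, a Borel.) Also note that $e=2$ requires no new orbit-theoretic argument at all: it is immediate from the statement of Theorem \ref{th:main}, whose second bullet excludes both $\mathrm{SL}(e,q^{d/e})\leq G_r^{\rm geom}$ for $e>2$ and $G_r^{\rm arith}\leq\Gamma\mathrm{L}_q(1,q^d)$, given that $f$ is exceptional scattered and not a monomial.

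Two concrete failures remain in your case analysis. First, you work only at $s=\infty$: but in the case $d=\ell$ the pole of $s$ lies under a \emph{single totally ramified} place of $k_r(s,z)$ (the zero of $z$, with ramification index $q^\ell-1=q^d-1$), so Lemma \ref{lem:van} only produces a transitive $H$ and no contradiction; the paper instead uses the \emph{zero} of $s$, over which sit the pole of $z$ (ramification $q^\ell-q^k$) and $q^k-1$ unramified places, translating into $q^k-1$ nonzero fixed vectors of $H$ --- these fixed vectors drive the whole argument. Second, your fallback for excluding $d=\ell$, the divisibility $(q^\ell-q^k)/(q^{\gcd(k,\ell)}-1)\mid i<\ell$, is unavailable here: the bound $i\leq d$ in Theorem \ref{th:main} came precisely from $G_r^{\rm arith}\leq\Gamma\mathrm{L}_q(1,q^d)$, whereas now $G_r^{\rm geom}\supseteq\mathrm{SL}(2,q^{d/2})$ forces $i\geq q^{d/2}$, making the divisibility vacuous. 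The paper's actual exclusion of $d=\ell$ is a further substantive step your plan does not foresee: spread invariance forces the fixed set to be a single spread element (so $k=d/2$), whence $H$ is $L$-linear, fixes $v\neq0$, and has an orbit of length $q^{d/2}(q^{d/2}-1)$, so $H$ is the full stabilizer $\mathrm{GL}(2,q^{d/2})_v$; together with $\mathrm{SL}(2,q^{d/2})$ this puts all of $\mathrm{GL}(2,q^{d/2})$ inside $G_r^{\rm geom}$, which removes exactly the determinant obstruction that made Proposition \ref{prop:SL} require $e>2$, and the rank criterion of \cite[Theorem 2.7]{MR4163074} then contradicts exceptionality. Without the spread-invariance mechanism and the zero-of-$s$ ramification data, neither $k=d/2$ in this branch nor the final contradiction can be reached from your outline.
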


\begin{proof}
    The claim $e=2$ follows from Theorem \ref{th:main}.
    
    The groups $G_r^{\rm geom}$ and $G_r^{\rm arith}$ are contained in the subgroup $\mathcal{G}:=\eta({\rm \Gamma L}_q(2,q^{d/2}))$ of $ {\rm GL}(V)$, and the results of Section \ref{Sect:EmbeddingIssue} prove the following fact: in its action on $V= {\rm GL}(d,q)$, $\mathcal{G}$ preserves a $\frac{d}{2}$-spread, namely the Desarguesian $\frac{d}{2}$-spread $\mathcal{D}$ associated with the natural embedding $\eta$; see also \cite{MR561126}. To be more precise, with the notation of \cite[Section 5]{MR335659} that we recalled in the proof of Theorem \ref{th:hering}: $V$ is a $2$-dimensional vector space over a finite field $L\cong\mathbb{F}_{q^{d/2}}$, and every spread element consists of the $L$-multiples of a given vector of $V$.
    This implies in particular that any subgroup $H$ of $G_r^{\rm geom}$ preserves the partition $\mathcal{D}$ of $V\setminus\{0\}$ into $(q^{d/2}-1)$-subsets.
    
    Suppose by contradiction that $d=\ell$, and let $z\in V\setminus\{0\}$ be such that $s=f(z)/z^{q^\ell}$. Then the zero of $s$ in $k_r(s)$ lies exactly under $q^k$ places of $k_r(s,z)$: the pole of $z$, with ramification index $q^{\ell}-q^{k}$, and the $q^k-1$ distinct zeros of $f(z)/z$, each with ramification index $1$. By Lemma \ref{lem:van}, there exists $H\leq G_r^{\rm geom}$ with $q^k$ fixed vectors $v_0=0,\ldots,v_{q^k}\in V$ and a single remaining orbit of length $q^{d}-q^k$. Since $H$ preserves $\mathcal{D}$, we have that $\{v_1,\ldots,v_{q^k}\}$ is a single spread element $\mathcal{E}=\{\lambda v\mid \lambda \in L\}\in\mathcal{D}$ and $(q^{d/2}-1)\mid(q^k-1)$, whence $k=d/2$.
    Also, as $H$ fixes $\lambda v$ for any $\lambda\in L$, it follows that $H$ is not only $L$-semilinear, but rather $L$-linear. Thus, $H$ is contained in ${\rm GL}(2,q^{d/2})$.
    Since $H$ fixes a non-zero vector $v\in V$ and has an orbit of length $q^{d/2}(q^{d/2}-1)$, $H$ contains the full stabilizer ${\rm GL}(2,q^{d/2})_v$ of $v$ in ${\rm GL}(2,q^{d/2})$. Therefore $G_r^{\rm geom}$ contains both ${\rm GL}(2,q^{d/2})_v$ and ${\rm SL}(2,q^{d/2})$, whence ${\rm GL}(2,q^{d/2})\subseteq G_r^{\rm geom}$.
    
    Let $\gamma\in G_r^{\rm arith}$. Arguing as in the proof of Proposition \ref{prop:SL}, it is possible to find $\alpha\in G_r^{\rm geom}$ such that ${\rm rank}(\eta(\alpha\gamma)-\mathbb{I}_d)<d-1$. In fact, a matrix $D$ with the two columns as in Equation \eqref{eq:trovainGL} can always be found in $G_r^{\rm geom}$, because we have proved that $G_r^{\rm geom}$ contains not only ${\rm SL}(2,q^{d/2})$ but also ${\rm GL}(2,q^{d/2})$. Thus, a contradiction is obtained in the same way as in the proof of Proposition \ref{prop:SL}.
    
    We have then shown that $d=k$. Define $z$ as above and let $P$ be the pole of $s$ in $k_r(z)$. There are two places over $P$ in $k_r(s,z)$, namely the zero and the pole of $z$, with ramification index $q^{\ell}-1$ and $q^{d}-q^{\ell}$, respectively. By Lemma \ref{lem:van}, $q^{\ell}-1$ and $q^{d}-q^{\ell}$ are the sizes of the orbits on $V\setminus\{0\}$ of a subgroup $H\leq G_r^{\rm geom}$. As $H$ preserves $\mathcal{D}$, we have $\ell=d/2$. The claim is proved.
\end{proof}

We can now provide a partial classification also for the $d$ even case, as stated in Theorem \ref{d_even}.

\begin{theorem}\label{d_even}
Let $f(x)\in\mathbb{F}_{q^n}[x]$ be an $\ell$-normalized $\mathbb{F}_q$-linearized polynomial of $q$-degree $k<n$, and let $d:= \max\{k,\ell\}$.
Suppose that $f(x)$ is not a monomial.
Then $f(x)$ is not exceptional scattered unless one of the following holds:
\begin{enumerate}
    \item $\mathrm{SL}(2,q^{d/2})\triangleleft G_r^{\rm geom} \triangleleft G_r^{\rm arith} \leq \Gamma \mathrm{L}_{q}(2,q^{d/2})$;%, with $k=d$ and $\ell=d/2$;
    %\item $\mathrm{Sp}(e,q^{d/e})\triangleleft G_r^{\rm geom} \triangleleft G_r^{\rm arith} \leq \Gamma \mathrm{L}_{q}(e,q^{d/e})$, for some even $e\mid d$ with $e\geq4$;
    \item $G_2(2^{d/6})^\prime \triangleleft G_r^{\rm geom}\triangleleft G_r^{\rm arith}\leq \Gamma \mathrm{L}_{q}(6,2^{d/6})$, where $p=2$ and $6\mid d$;
    \item $q^d\in\{5^2,7^2,11^2,23^2,29^2,59^2,2^4,3^4,3^6\}$.
%    \item $(q,d)\in\{(2,4),(3,4),(3,6)\}$.
\end{enumerate}
\end{theorem}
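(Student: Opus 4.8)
The plan is to feed the dichotomy of Theorem~\ref{th:hering_here} into the two tools already developed and then to dispose of the one case that is not allowed to survive in the statement, namely the symplectic case. Assuming $f(x)$ exceptional scattered and not a monomial, Theorem~\ref{th:hering_here} leaves four possibilities for the pair $(G_r^{\rm geom},G_r^{\rm arith})$. Cases 3 and 4 are literally items 2 and 3 of the present statement, so nothing has to be done there. In Case 1 we have $\mathrm{SL}(e,q^{d/e})\triangleleft G_r^{\rm geom}\triangleleft G_r^{\rm arith}\leq\Gamma\mathrm{L}_q(e,q^{d/e})$ for some $e\mid d$; since $f(x)$ is not a monomial, Proposition~\ref{prop:SL2} forces $e=2$, which is exactly item 1. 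Hence the whole content of the theorem is the exclusion of Case 2, i.e. of $\mathrm{Sp}(e,q^{d/e})\triangleleft G_r^{\rm geom}$ with $e\geq 4$ even.

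To rule out Case 2 I would, as in the proof of Theorem~\ref{th:main}, produce a single element $g$ lying in the Frobenius coset (the coset of $G_r^{\rm geom}$ in $G_r^{\rm arith}$ consisting of the $\gamma$ with $\varphi_r(\gamma)$ the Frobenius of $k_r:\mathbb{F}_{q^{nr}}$) such that ${\rm rank}(\eta(g)-\mathbb{I}_d)<d-1$; then \cite[Theorem~2.7]{MR4163074} denies that $f(x)$ is exceptional scattered, a contradiction. Fix $\gamma_0$ in that coset. The point is that, since $e\geq4$, the symplectic space $\mathbb{F}_{q^{d/e}}^{e}$ has Witt index $e/2\geq 2$, so it contains a totally isotropic $\mathbb{F}_{q^{d/e}}$-plane $U=\langle u_1,u_2\rangle$. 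If $\gamma_0$ maps totally isotropic subspaces to totally isotropic subspaces, then $\gamma_0 U$ is again a totally isotropic plane; the linear bijection $\gamma_0 u_i\mapsto u_i$ is then an isometry between two totally isotropic planes, and Witt's theorem extends it to some $\alpha\in\mathrm{Sp}(e,q^{d/e})\subseteq G_r^{\rm geom}$. Setting $g:=\alpha\gamma_0$, still in the Frobenius coset, one has $g(u_i)=u_i$; and since the $\mathbb{F}_{q^{d/e}}$-semilinear part of $g$ fixes $\mathbb{F}_q$ pointwise, $g$ fixes the $2$-dimensional $\mathbb{F}_q$-space $\{au_1+bu_2:a,b\in\mathbb{F}_q\}$. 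Thus $\dim_{\mathbb{F}_q}\ker(\eta(g)-\mathbb{I}_d)\geq 2$, i.e. ${\rm rank}(\eta(g)-\mathbb{I}_d)\leq d-2$, as required.

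The step that carries all the weight, and which I expect to be the main obstacle, is the hypothesis used above that $\gamma_0$ preserves isotropy, i.e. that every element of $G_r^{\rm arith}$ is a (semilinear) symplectic similitude. I would obtain this from a normalizer argument: for $e\geq4$ the group $\mathrm{Sp}(e,q^{d/e})$ is quasisimple, hence characteristic in $G_r^{\rm geom}$, and therefore normal in $G_r^{\rm arith}$; consequently $G_r^{\rm arith}$ is contained in the normalizer of $\mathrm{Sp}(e,q^{d/e})$ inside $\Gamma\mathrm{L}_q(e,q^{d/e})$, which is $\mathrm{GSp}(e,q^{d/e})\rtimes\mathrm{Aut}(\mathbb{F}_{q^{d/e}}:\mathbb{F}_q)$ because the form $H$ in \eqref{eq:symp} is defined over $\mathbb{F}_q$. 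Every such element scales the form by its multiplier and applies a field automorphism, hence sends isotropic subspaces to isotropic subspaces, as needed. It is worth recording where $e\geq4$ is essential and why the surviving cases are not killed by the same argument: for $e=2$ (item 1) the Witt index is $1$ and no totally isotropic plane exists, while for $G_2(2^{d/6})$ (item 2) the group $G_r^{\rm geom}$ contains only $G_2$ and not the full $\mathrm{Sp}(6,\cdot)$, so Witt's theorem is unavailable; this is consistent with both configurations appearing in the conclusion. The only genuinely delicate points to check are the quasisimplicity and characteristicity of $\mathrm{Sp}(e,q^{d/e})$ for the smallest admissible $q^{d/e}$ (the borderline $\mathrm{Sp}(4,2)$-type configuration), which should either satisfy the hypotheses or be absorbed into the sporadic list of item 3.
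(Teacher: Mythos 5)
Your proposal is correct, and its skeleton coincides with the paper's: Theorem \ref{th:main} and Proposition \ref{prop:SL2} reduce everything to excluding Case 2 of Theorem \ref{th:hering_here}, and the exclusion is achieved by exhibiting an element of the Frobenius coset whose fixed space on $V$ has $\mathbb{F}_q$-dimension at least $2$, contradicting \cite[Theorem 2.7]{MR4163074}. The execution of that exclusion, however, is genuinely different. You argue synthetically: $G_r^{\rm arith}$ normalizes $\mathrm{Sp}(e,q^{d/e})$, hence consists of semilinear symplectic similitudes, hence preserves total isotropy; picking a totally isotropic plane $U=\langle u_1,u_2\rangle$ (Witt index $e/2\geq 2$) and applying Witt's extension theorem to the map $\gamma_0u_i\mapsto u_i$ produces $\alpha\in\mathrm{Sp}(e,q^{d/e})\leq G_r^{\rm geom}$ such that $\alpha\gamma_0$ fixes $u_1,u_2$, hence fixes their $\mathbb{F}_q$-span pointwise because the semilinear part fixes $\mathbb{F}_q$. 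The paper instead computes: Hua's description of ${\rm Aut}(\mathrm{Sp}(e,q^{d/e}))$ \cite{HUA} together with the centralizer \cite[4.2.5]{OMEARA} gives the explicit containment \eqref{eq:aritto}; any $M\in G_r^{\rm arith}$ is factored as $D\circ A$ with $\eta(D)$ block-diagonal, and one builds $\widetilde D\in\mathrm{Sp}(e,q^{d/e})$ whose first $e/2$ columns are prescribed by $\eta(D)^{-1}$ --- the justification being that the corresponding vectors $f_1,\ldots,f_{e/2}$ are supported in the first $e/2$ coordinates, hence pairwise $\omega$-orthogonal by \eqref{eq:symp} and extendable to a symplectic basis, which is exactly the Witt-type extension you invoke, applied to $e/2$ vectors at once. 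The paper's version buys the stronger bound ${\rm rank}(\eta(MB)-\mathbb{I}_d)\leq d-e/2$ and works in every coset of $G_r^{\rm geom}$; yours buys brevity, avoids the matrix bookkeeping, and is correctly targeted, since only the Frobenius coset is needed for \cite[Theorem 2.7]{MR4163074}.

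Two points to tighten, both of which you flagged. First, ``quasisimple hence characteristic'' is not a valid implication in general; what saves it here is that $\mathrm{Sp}(e,q^{d/e})$ is the \emph{unique} component of $G_r^{\rm geom}$: a second component would centralize it, but the centralizer of $\mathrm{Sp}(e,q^{d/e})$ in ${\rm \Gamma L}_q(e,q^{d/e})$ is the abelian group of scalars \cite[4.2.5]{OMEARA}. With that, $\mathrm{Sp}(e,q^{d/e})$ is characteristic in $G_r^{\rm geom}$ and so normal in $G_r^{\rm arith}$; note the paper asserts this normality at essentially the same level of detail. Second, your borderline check is right: $\mathrm{Sp}(4,2)$ forces $q=2$ and $d=e=4$, so $q^d=2^4$ is absorbed into item 3, while $\mathrm{Sp}(e,2)$ is simple for $e\geq 6$, so quasisimplicity holds in all surviving cases. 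Finally, your normalizer identification ${\rm GSp}(e,q^{d/e})\rtimes{\rm Aut}(\mathbb{F}_{q^{d/e}}:\mathbb{F}_q)$ agrees with the paper's $\langle\mathcal{A}_q,\mathcal{Z}_q\rangle$ (the scalars and the $R_a$-twists generate the similitudes over $\mathrm{Sp}$), and since $H$ in \eqref{eq:symp} has entries in the prime field, such maps do send totally isotropic subspaces to totally isotropic subspaces, as you claim.
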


\begin{proof}
    By Theorem \ref{th:main} and Proposition \ref{prop:SL2}, it is enough to show by contradiction that Case 2. in Theorem \ref{th:hering_here} cannot occur.
    
    We first recall the automorphism group ${\rm Aut}(\mathrm{Sp}(e,q^{d/e}))$ of ${\rm Sp}(e,q^{d/e})$, as a subgroup of ${\rm \Gamma L}(e,q^{d/e})$. Indeed, by the main theorem in \cite{HUA}, ${\rm Aut}(\mathrm{Sp}(e,q^{d/e}))$ is made by the maps
    \begin{equation}
    \begin{array}{ccc}
    {\rm Sp}(e,q^{d/e})&\to& {\rm Sp}(e,q^{d/e})\\
    A&\mapsto& A^{\prime}\circ R_a\circ  \sigma \circ A\circ R_a^{-1} \circ(A^{\prime})^{-1},
    \end{array}
    \end{equation}
    where $A^{\prime}$ runs in ${\rm Sp}(e,q^{d/e})$; $R_a=\begin{pmatrix}\mathbb{I}_{e/2}&{\bf 0} \\ {\bf 0} & a\mathbb{I}_{e/2}  \end{pmatrix}$, where $a$ is either $1$ or a fixed nonsquare of $\mathbb{F}_{q^{d/e}}$; and $\sigma\in{\rm Aut}(\mathbb{F}_{q^{d/e}})$.
    Define $\mathcal{R}_a=\{R_{\sigma(a)}\colon \sigma\in{\rm Aut}(\mathbb{F}_{q^{d/e}})\}$ and  $(\mathcal{R}_a)_q=\{R_{\sigma(a)}\colon \sigma\in{\rm Aut}(\mathbb{F}_{q^{d/e}}: \mathbb{F}_q)\}$.
    Since $R_a \circ \sigma= \sigma \circ R_{\sigma^{-1}(a)}$, we have $\mathcal{R}_a\leq{\rm Aut}(\mathbb{F}_{q^{d/e}})$ and hence ${\rm Aut}({\rm Sp}(e,q^{d/e}))$ can be identified with
    \[
    \mathcal{A}:= ({\rm Sp}(e,q^{d/e})\rtimes \mathcal{R}_a) \rtimes  {\rm Aut}(\mathbb{F}_{q^{d/e}})\leq {\rm \Gamma L}(e,q^{d/e}).
    \]
    
    Therefore, the intersection ${\rm Aut}_q(\mathrm{Sp}(e,q^{d/e}))$ between ${\rm Aut}({\rm Sp}(e,q^{d/e}))$ and ${\rm \Gamma L}_q(e,q^{d/e})$ equals
    \[
    \mathcal{A}_q:=({\rm Sp}(e,q^{d/e})\rtimes (\mathcal{R}_a)_q) \rtimes  {\rm Aut}(\mathbb{F}_{q^{d/e}}:\mathbb{F}_q)\leq {\rm \Gamma L}_q(e,q^{d/e}).
    \]
    Also, the centralizer $\mathcal{Z}_q$ of ${\rm Sp}(e,q^{d/e})$ in ${\rm \Gamma L}_q(e,q^{d/e})$ is $\mathcal{Z}_q=\{S_\alpha : \alpha \in \mathbb{F}_{q^{d/e}}\}$, where $S_\alpha$ is the scalar matrix with $\alpha$ on the diagonal entries; see e.g. \cite[4.2.5]{OMEARA}.
    
    Since $G_r^{\rm arith}$ is contained in ${\rm \Gamma L}_q(e,q^{d/e})$ and contains ${\rm Sp}(e,q^{d/e})$ as a normal subgroup,
    %${\rm Sp}(e,q^{d/e})\triangleleft G_r^{\rm arith}\leq{\rm \Gamma L}_q(e,q^{d/e})$,
    \[
    G_r^{\rm arith}\leq\langle \mathcal{A}_q, \mathcal{Z}_q \rangle= \mathcal{Z}_q \times \mathcal{A}_q
    \]
    and hence
    \begin{equation}\label{eq:aritto}
    G_r^{\rm arith}\leq\left\{ R\circ\sigma \circ D\circ A \quad:\quad  R\in(\mathcal{R}_a)_q, \ \sigma \in {\rm Aut}(\mathbb{F}_{q^{d/e}}:\mathbb{F}_q), \ S \in \mathcal{Z}_q, \ A\in{\rm Sp}(e,q^{d/e})\right\}.
    \end{equation}
    In analogy with the proof of Proposition \ref{prop:SL}, we will now show that, for any $M\in G_r^{\rm arith}$, there exists $B\in G_r^{\rm geom}$ such that ${\rm rank}(\eta(MB)-\mathbb{I}_d)<d-1$.
    This is enough to conclude the proof: in fact, from $G_r^{\rm geom}\triangleleft G_r^{\rm arith}$ it follows that there exists $B^\prime\in G_r^{\rm geom}$ such that ${\rm rank}(\eta(MB^{\prime})-\mathbb{I}_d)<d-1$, which yields a contradiction to $f(x)$ being exceptional scattered by \cite[Theorem 2.7]{MR4163074}.

Let $M\in G_r^{\rm arith}$. By Equation \eqref{eq:aritto}, we can write $M=D\circ A$, where $A\in {\rm Sp}(e,q^{d/e})$ and $D=R_{a^{q^i}}\circ\sigma \circ S$, with $\sigma\in{\rm Aut}(\mathbb{F}_{q^{d/e}}:\mathbb{F}_q)$ and $S\in\mathcal{Z}_q$. Therefore, $\eta(D)$ is a $d\times d$ diagonal-block matrix over $\mathbb{F}_q$ with $e$ blocks of size $d/e$, and the same holds for its inverse matrix $\eta(D)^{-1}$.

Let $\mathcal{B}$ be the $\mathbb{F}_q$-basis of $\mathbb{F}_{q^{d/e}}$ associated with $\eta=\eta|_{\mathcal{A,\mathcal{C}}}$ as in Section \ref{Sect:EmbeddingIssue}.
Consider the columns $c_1,\ldots,c_{e/2}\in\mathbb{F}_{q}^d$ of $\eta(D)^{-1}$ in positions 
$$1, 1+d/e, 1+2d/e,\ldots,  1+(e/2-1)d/e,$$
respectively.
For any $i=1,\ldots,e/2$, define the vector $f_i\in\mathbb{F}_{q^{d/e}}^e$ such that, for any $j=1,\ldots,e$, the $j$-th block of $c_i$ gives the coordinates over $\mathcal{B}$ of the $j$-th entry of $f_i$.

Note that the only non-zero entry of $f_i$ is in position $i$. By Equation \eqref{eq:symp}, this implies that $\omega(f_i,f_{i^\prime})=0$ for any $1\leq i,i^{\prime}\leq e/2$, where $\omega$ is the symplectic form associated with ${\rm Sp}(e,q^{d/e})$.
Then $\{f_1,\ldots,f_{e/2}\}$ can be extended to a symplectic basis of $\mathbb{F}_{q^{d/e}}^e$, and hence there exists $\widetilde{D}\in{\rm Sp}(e,q^{d/e})\leq G_r^{\rm geom}$ with $f_i$ as $i$-th column, $i=1,\ldots,e/2$.
As described in Section \ref{Sect:EmbeddingIssue} (see Equation \eqref{Def:varphi}), the first $e/2$ columns of $\eta(\widetilde{D})$ are respectively $c_1,\ldots,c_{e/2}$.

Finally, let $B:=A^{-1}\widetilde{D}\in G_r^{\rm geom}$. Then $MB=D\widetilde{D}$, and hence $\eta(MB)=\eta(D)\cdot\eta(\widetilde{D})$ has the same $(1+jd/e)$-th column of the identity matrix $\mathbb{I}_d$ for any $j=0,\ldots,e/2-1$.
Therefore
\[{\rm rank}(\eta(MB)-\mathbb{I}_d)\leq d-e/2<d-1\]
and the proof is complete.
\end{proof}

\section{Acknowledgements} 

This research was supported by the Italian National Group for Algebraic and Geometric Structures and their Applications (GNSAGA - INdAM).
The first author is funded by Progetto {\em Applicazioni di curve algebriche su campi finiti alla Teoria dei Codici e alla Crittografia}, Fondo Ricerca di Base, 2019, Università degli Studi di Perugia.
The second author is funded by Progetto {\em Strutture Geometriche, Combinatoria e loro Applicazioni}, Fondo Ricerca di
Base, 2019, Università degli Studi di Perugia.

\bibliographystyle{acm}
\bibliography{biblio.bib}

\end{document}